\theoremstyle{plain}
\newtheorem{thm}{\protect\theoremname}[section]
  \theoremstyle{plain}
  \newtheorem{lem}[thm]{\protect\lemmaname}
  \theoremstyle{plain}
  \newtheorem{fact}[thm]{\protect\factname}
  \theoremstyle{definition}
  \newtheorem{defn}[thm]{\protect\definitionname}
  \theoremstyle{remark}
  \newtheorem*{acknowledgement*}{\protect\acknowledgementname}
  \theoremstyle{remark}
  \newtheorem*{claim*}{\protect\claimname}
  \theoremstyle{remark}
  \newtheorem{rem}[thm]{\protect\remarkname}
  \theoremstyle{plain}
  \newtheorem{cor}[thm]{\protect\corollaryname}
 \newlist{casenv}{enumerate}{4}
 \setlist[casenv]{leftmargin=*,align=left,widest={iiii}}
 \setlist[casenv,1]{label={{\itshape\ \casename} \arabic*.},ref=\arabic*}
 \setlist[casenv,2]{label={{\itshape\ \casename} \roman*.},ref=\roman*}
 \setlist[casenv,3]{label={{\itshape\ \casename\ \alph*.}},ref=\alph*}
 \setlist[casenv,4]{label={{\itshape\ \casename} \arabic*.},ref=\arabic*}
\def\Ind#1#2{#1\setbox0=\hbox{$#1x$}\kern\wd0\hbox to 0pt{\hss$#1\mid$\hss}
\lower.9\ht0\hbox to 0pt{\hss$#1\smile$\hss}\kern\wd0}
\def\ind{\mathop{\mathpalette\Ind{}}}
\def\Notind#1#2{#1\setbox0=\hbox{$#1x$}\kern\wd0\hbox to 0pt{\mathchardef
\nn="3236\hss$#1\nn$\kern1.4\wd0\hss}\hbox to
0pt{\hss$#1\mid$\hss}\lower.9\ht0
\hbox to 0pt{\hss$#1\smile$\hss}\kern\wd0}
  \providecommand{\acknowledgementname}{Acknowledgement}
  \providecommand{\claimname}{Claim}
  \providecommand{\corollaryname}{Corollary}
  \providecommand{\definitionname}{Definition}
  \providecommand{\factname}{Fact}
  \providecommand{\lemmaname}{Lemma}
  \providecommand{\remarkname}{Remark}
 \providecommand{\casename}{Case}
\providecommand{\theoremname}{Theorem}
\begin{document}
\global\long\def\Rr{\mathbb{R}}
 \global\long\def\dim#1{\mathrm{d}\left(#1\right)}
 \global\long\def\cl#1{\mathrm{cl}\left(#1\right)}
 \global\long\def\gcl#1{\mathrm{gcl}\left(#1\right)}
 \global\long\def\Aut{\mathrm{Aut}}
 \global\long\def\Autf{\mathrm{Autf}}
 \global\long\def\acleq{\mbox{\ensuremath{\mathrm{acl}}}}
 \global\long\def\FK#1#2{\left\langle {#1\atop #2}\right\rangle }
 \global\long\def\PK#1#2{\left({#1\atop #2}\right)}
 \global\long\def\bdd{\mathrm{Bdd}}

\begin{abstract}
Suppose $M$ is a countable ab-initio (uncollapsed) generic structure
which is obtained from a pre-dimension function with rational coefficients.
We show that if $H$ is a subgroup of $\Aut\left(M\right)$ with $\left[\Aut\left(M\right):H\right]<2^{\aleph_{0}}$,
then there exists a finite set $A\subseteq M$ such that $\Aut_{A}\left(M\right)\subseteq H$.
This shows that $\Aut\left(M\right)$ has the small index property. 
\end{abstract}

\title[SIP of AUT-Groups of ab-initio generics]{The small index property of automorphism groups of ab-initio generic
structures}

\author{Zaniar Ghadernezhad}

\address{School of Mathematics, Institute for Research in Fundamental Sciences
(IPM), P.O. Box 19395-5746, Tehran, Iran.}

\email{zaniar.gh@gmail.com}

\maketitle
\tableofcontents{}

\section{Introduction}

\subsection{Background}

It is well-known that the automorphism group of a countable structure,
with the point-wise convergence topology, is a closed subgroup of
the symmetric group of its underlying set. Conversely, one can associate
a first-order structure to every closed subgroup of the symmetric
group of a countable set in such way that the automorphism group of
the associated structure is exactly the group that one started with.

Suppose $M$ is a first-order countable structure and let $G:=\Aut{\left(M\right)}$.
A subgroup $H$ of $G$ is said to have \emph{small index} in $G$
if $\left[G:H\right]<2^{\aleph_{0}}$. One can easily see that open
subgroups of $G$ has small index in $G$. We say $\Aut{\left(M\right)}$
has the \emph{small index property}, denoted by SIP, if every subgroup
of $\Aut{\left(M\right)}$ of small index is open. The small index
property for $\Aut{\left(M\right)}$ indicates a condition under which
the topology on $\Aut{\left(M\right)}$ can be recovered from its
abstract group structure. 

When the structure $M$ is $\omega$-categorical (or equivalently
when $\Aut{\left(M\right)}$ is \emph{oligomorphic}) from the small
index property of $\Aut{\left(M\right)}$ one can `reconstruct' $M$
from its automorphism group; namley the topology determines the structure
$M$ up to \emph{bi-interpretability} (see Section 5. in \cite{MacS},
for more details). The small index property has been proved for the
automorphism groups of various first-order $\omega$-categorical structures:
the countable infinite set without structure; the countable dense
linear ordering $(\mathbb{Q},<)$; a vector space of dimension $\omega$
over a finite or countable division ring; the random graph; countable
$\omega$-stable $\omega$-categorical structures (see \cite{LAS-AUT-SIP}
for references and more details). 

Outside the $\omega$-categorical context there are few known results.
The small index property has also been proved for some countable structures
which are not saturated: the free groups of countable rank (\cite{MR1438640});
arithmetically saturated models of arithmetic (\cite{MR1368465}).
It worth noting that, in \cite{MR1204064}, Lascar and Shelah proved
that the automorphism group of every uncountable saturated structures
has SIP. 

There are few known methods for proving the small index property (cf.
\cite{MacS}, for an overview). In this paper, we adopt the method
in \cite{MR1231710}. One key combinatorial property to prove $\Aut\left(M\right)$
has SIP is to show the class of all finite substructures of $M$,
up to isomorphism, has the \emph{extension property} (see Definition
\ref{def-ext}). The extension property has been originally proved
in \cite{HruEP} for the class of all finite graphs and later generalized
by Herwig in \cite{MR1357282,MR1658539}. The extension property is
used to prove $\Aut\left(M\right)$ has \emph{ample homogeneous generic
automorphisms} (see Definition $\ref{def:base}$). It is shown (in
\cite{MR1231710}, Theorem 5.3) that if $M$ is $\omega$-categorical
with ample homogeneous generic automorphisms then $\Aut\left(M\right)$
has SIP.

Moreover, Lascar shows the following interesting theorem (Théorème
1 in \cite{Lascar}): Suppose $M$ is a countable saturated structure
with a $\emptyset$-definable strongly minimal subset $D$ such that
$M$ is in the algebraic closure of $D$. If $H$ is a subgroup of
$\Aut\left(M\right)$ of countable index there there is a finite set
$A$ of $M$ such that every $A$-strong automorphism is in $H$.
We refer to this as \emph{almost} SIP.

\subsection{Setting}

The Hrushovski construction which originated in \cite{Hrunew} admits
many variations and can be presented at various levels of generality.
Here, we consider the following basic case and comment on generalizations
later. The article \cite{Wag1} is a convenient general reference
for these constructions. 

Let $\mathfrak{L}=\left\{ \mathcal{R}\right\} $ be a first-order
language where $\mathcal{R}$ is a binary relation that is irreflexive
and symmetric. Let $\mathsf{K}$ be the class of all finite $\mathfrak{L}$-structures
(i.e. $\mathsf{K}$ is the class of all finite graphs). Suppose $M,N\subseteq P$
and $M,N,P$ are $\mathfrak{L}$-structures, we will often write $MN$
for the $\mathfrak{L}$-substructure of $P$ with domain $M\cup N$.
We write $M\subseteq_{fin}P$ when $M$ is a finite substructure of
$P$.\sloppy{ We write $\mathcal{R}\left(M\right)$ for the set
of all edges of $M$ and, write $\mathcal{R}\left(M;N\right)$ for
the set $\left\{ \left\{ m,n\right\} :m\in M,n\in N\mbox{ and }\mathcal{R}^{MN}\left(m,n\right)\right\} $.

Suppose $\mathfrak{m}\geq2$ is a fixed integer. If $A\in\mathsf{K}$
consider the \emph{pre-dimension} $\delta:\mathcal{\mathsf{K}\rightarrow\mathbb{Z}}$
such that $\delta\left(A\right)=\mathfrak{m}\cdot\left|A\right|-\left|\mathcal{R}\left(A\right)\right|$.
Let $A,B\in\mathsf{K}$, we say $A$ is \emph{$\leqslant$-closed}
or \emph{self-sufficient} in $B$ and write $A\leqslant B$, if and
only if $\delta(A')\geq\delta(A)$ for all $A\subseteq A'\subseteq B$.
If $N$ is infinite and $A\subseteq N$, we write $A\leqslant N$
when $A\leqslant B$ for every finite substructure $B$ of $N$ that
contains $A$. The following is standard (cf. \cite{Hrunew}). 
\begin{lem}
\label{lem:basic-generic-delta} Suppose $A,B\in\mathsf{K}$. Then:
\begin{enumerate}
\item $\delta\left(AB\right)\leq\delta\left(A\right)+\delta\left(B\right)-\delta\left(A\cap B\right)$;
\item If $A\leqslant B$ and $X\subseteq B$, then $A\cap X\leqslant X$;
\item If $A\leqslant B\leqslant C$, then $A\leqslant C$.
\end{enumerate}
\end{lem}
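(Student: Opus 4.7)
The plan is to establish the three items in order, since (2) and (3) both rest on the submodularity of $\delta$ proved in (1); each will be a direct computation from the definition of $\delta$.

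For (1), I would expand $\delta(AB)=\mathfrak{m}\cdot|A\cup B|-|\mathcal{R}(AB)|$ and handle the vertex and edge contributions separately. Inclusion--exclusion gives $|A\cup B|=|A|+|B|-|A\cap B|$, so after multiplying by $\mathfrak{m}$ the vertex part already matches the right-hand side. For the edges I would observe that $\mathcal{R}(A),\mathcal{R}(B)\subseteq\mathcal{R}(AB)$ while $\mathcal{R}(A)\cap\mathcal{R}(B)\subseteq\mathcal{R}(A\cap B)$, which yields $|\mathcal{R}(AB)|\geq|\mathcal{R}(A)|+|\mathcal{R}(B)|-|\mathcal{R}(A\cap B)|$. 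Subtracting with the correct sign delivers submodularity. This step is purely bookkeeping with inclusion--exclusion and has no real subtleties.

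For (2), I would fix $Y$ with $A\cap X\subseteq Y\subseteq X$ and aim for $\delta(Y)\geq\delta(A\cap X)$. The plan is to adjoin $A$ to $Y$: the substructure $AY$ sits between $A$ and $B$ (since $Y\subseteq X\subseteq B$), so $A\leqslant B$ forces $\delta(AY)\geq\delta(A)$. Applying (1) to the pair $A,Y$ together with the elementary identity $A\cap Y=A\cap X$, which holds precisely because $A\cap X\subseteq Y\subseteq X$, gives
\[
\delta(A)\;\leq\;\delta(AY)\;\leq\;\delta(A)+\delta(Y)-\delta(A\cap X),
\]
and rearranging yields the claim.

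For (3), given $D$ with $A\subseteq D\subseteq C$, I would analogously form $BD\subseteq C$: since $B\leqslant C$, we have $\delta(BD)\geq\delta(B)$, and submodularity applied to $B,D$ then yields $\delta(D)\geq\delta(B\cap D)$. Because $A\subseteq B\cap D\subseteq B$ and $A\leqslant B$, we conclude $\delta(B\cap D)\geq\delta(A)$, and chaining the inequalities gives $\delta(D)\geq\delta(A)$. I do not expect a serious obstacle anywhere: the common idea in (2) and (3) is the standard trick of enlarging a subset so that it lies between the endpoints of a known self-sufficient pair and then invoking (1). The only spot requiring genuine care is the set identity $A\cap Y=A\cap X$ in the proof of (2), without which the submodularity estimate will not land on $\delta(A\cap X)$ on the nose.
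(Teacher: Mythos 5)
Your proof is correct and is the standard argument (the paper itself offers no proof, deferring to Hrushovski's original paper). Part (1) is the usual inclusion--exclusion computation, and parts (2) and (3) both hinge on the same trick of enlarging the given subset to sandwich it in a known self-sufficient pair and then invoking submodularity; the set identity $A\cap Y = A\cap X$ that you flag in (2) is indeed the one point that needs a moment's verification, and your justification of it (from $A\cap X\subseteq Y\subseteq X$) is right.
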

Let $\mathsf{K_{0}}\subset\mathsf{K}$ be the set of all $A\in\mathsf{K}$
such that $\delta(A')\geq0$ for all $A'\subseteq A$. The class $\left(\mathsf{K_{0}},\leqslant\right)$
is called an \emph{ab-initio class} that is obtained from $\delta$.
Let $A,B,C\in\mathsf{K}$ with $A\subseteq B,C$. Then the \emph{free-amalgam}
of $B$ and $C$ over $A$, denoted by $B\otimes_{A}C$, consists
of the disjoint union of $B$ and $C$ over $A$ whose only relations
are those from $B$ and $C$.}
\begin{thm}
\label{free-amalgam} The class $\left(\mathsf{K_{0}},\leqslant\right)$
has the free-amalgamation property; namely, if $A,B,C\in\mathsf{K_{0}}$
and $A\subseteq B,C$ such that $A\leqslant B$ and $A\leqslant C$,
then $B\otimes_{A}C\in\mathsf{K_{0}}$. 
\end{thm}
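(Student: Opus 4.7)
The plan is to reduce the claim to a single identity for the pre-dimension of a free amalgam, and then apply self-sufficiency (Lemma \ref{lem:basic-generic-delta}(2)) to every subset.

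First I would observe the key computation: since the free amalgam adds no new edges, $|B\otimes_{A}C|=|B|+|C|-|A|$ and $\mathcal{R}(B\otimes_{A}C)=\mathcal{R}(B)\cup\mathcal{R}(C)$ with $\mathcal{R}(B)\cap\mathcal{R}(C)=\mathcal{R}(A)$, so
\[
\delta(B\otimes_{A}C)=\delta(B)+\delta(C)-\delta(A).
\]
In other words, free amalgamation realises equality in Lemma \ref{lem:basic-generic-delta}(1). This identity is what makes everything work; all other steps are bookkeeping.

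Next, to verify $B\otimes_{A}C\in\mathsf{K_{0}}$, I would take an arbitrary $X\subseteq B\otimes_{A}C$ and set $X_{B}=X\cap B$, $X_{C}=X\cap C$, $X_{A}=X\cap A$. Since $B$ and $C$ meet only in $A$ inside the free amalgam, $X_{B}\cap X_{C}=X_{A}$ and $X$ is itself the free amalgam $X_{B}\otimes_{X_{A}}X_{C}$. Applying the identity from the previous paragraph to $X$ gives
\[
\delta(X)=\delta(X_{B})+\delta(X_{C})-\delta(X_{A}).
\]

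Finally, I would invoke Lemma \ref{lem:basic-generic-delta}(2): because $A\leqslant B$ and $X_{B}\subseteq B$, we get $X_{A}=A\cap X_{B}\leqslant X_{B}$, whence $\delta(X_{A})\leq\delta(X_{B})$. Therefore $\delta(X)\geq\delta(X_{C})\geq 0$, the last inequality holding because $X_{C}\subseteq C\in\mathsf{K_{0}}$. Since $X\subseteq B\otimes_{A}C$ was arbitrary, $B\otimes_{A}C\in\mathsf{K_{0}}$.

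I do not expect a genuine obstacle here; the only subtle point is recognising that the free amalgam structure lets one intersect $X$ with each factor and obtain another free amalgam, so that the submodular inequality becomes an equality at every level and the hypothesis $A\leqslant B$ (symmetrically $A\leqslant C$) can be transferred to $X_{A}\leqslant X_{B}$ via Lemma \ref{lem:basic-generic-delta}(2).
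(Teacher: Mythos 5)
The paper states this theorem without proof, citing it as a standard fact from the Hrushovski-construction literature, so there is no in-paper argument to compare against. Your proof is correct and is the usual one: the decisive observation that any $X\subseteq B\otimes_{A}C$ is itself the free amalgam $X_{B}\otimes_{X_{A}}X_{C}$ of its traces turns the submodular inequality of Lemma \ref{lem:basic-generic-delta}(1) into an equality, and Lemma \ref{lem:basic-generic-delta}(2) transfers $A\leqslant B$ to $\delta\left(X_{B}\right)\geq\delta\left(X_{A}\right)$, yielding $\delta\left(X\right)\geq\delta\left(X_{C}\right)\geq 0$. A minor observation: you use only $A\leqslant B$ together with $C\in\mathsf{K_{0}}$, so the symmetric hypothesis $A\leqslant C$ is not actually needed for the stated conclusion $B\otimes_{A}C\in\mathsf{K_{0}}$ (it is of course needed if one also wants $B,C\leqslant B\otimes_{A}C$, which is the form required in the generic construction).
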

Using Fact \ref{free-amalgam} and a standard Fra\"{i}ssé-style construction,
we obtain the following well-known result (cf. \cite{Wag1,Zthesis}
for more details). 
\begin{thm}
\label{thm-generic} There is a unique countable structure $\mathbf{M}$
such that: $\mathbf{M}$ is the union of a chain of finite $\leqslant$-closed
sets; every isomorphisms between finite $\leqslant$-closed subsets
of $\mathbf{M}$ extend to an automorphism of $\mathbf{M}$; every
element of $\mathsf{K_{0}}$ is isomorphic to a $\leqslant$-closed
subset of $\mathbf{M}$. 
\end{thm}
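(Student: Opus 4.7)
The plan is to follow the standard Fra\"iss\'e-style construction, now carried out in the category whose objects are elements of $\mathsf{K_0}$ and whose morphisms are $\leqslant$-embeddings. Fact~\ref{free-amalgam} supplies the crucial amalgamation ingredient, and Lemma~\ref{lem:basic-generic-delta}(3) (transitivity of $\leqslant$) ensures that $\leqslant$-embeddings compose. Joint embedding is automatic from free amalgamation over $\emptyset \leqslant A,B$, so all the standard hypotheses are in place.

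First I would build $\mathbf{M}$ as the union of a chain $A_0 \leqslant A_1 \leqslant A_2 \leqslant \cdots$ with all $A_i \in \mathsf{K_0}$. At stage $n$ I enumerate (i) pairs $(B,C)$ with $B \leqslant C$, $B \leqslant A_n$, and $C \in \mathsf{K_0}$, and (ii) pairs $(X,Y)$ of finite $\leqslant$-closed subsets of what has been built so far together with isomorphisms $f\colon X\to Y$. To handle (i) I set $A_{n+1} := A_n \otimes_B C$, which lies in $\mathsf{K_0}$ with $A_n \leqslant A_{n+1}$ by Fact~\ref{free-amalgam}; to handle (ii) I extend $f$ using the same amalgamation. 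Standard bookkeeping arranges that every such task is eventually addressed, so in the limit $\mathbf{M}$ is countable, is a union of a chain of finite $\leqslant$-closed sets, and has the following extension property:
\begin{center}
whenever $B \leqslant \mathbf{M}$ is finite and $B \leqslant C \in \mathsf{K_0}$, there is a $\leqslant$-embedding $C \hookrightarrow \mathbf{M}$ fixing $B$ pointwise, with image $\leqslant \mathbf{M}$.
\end{center}

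Next I would derive the three listed properties. The third is immediate: given $C \in \mathsf{K_0}$ apply the extension property to $\emptyset \leqslant C$. For the second, given finite $\leqslant$-closed $X,Y \subseteq \mathbf{M}$ and an isomorphism $f\colon X \to Y$, I run a back-and-forth argument. At each step I have a partial isomorphism $g$ between finite $\leqslant$-closed subsets $X' \supseteq X$ and $Y' \supseteq Y$; to extend by a new point $a \in \mathbf{M}$, enlarge $X'$ to a finite $\leqslant$-closed set $X'' \leqslant \mathbf{M}$ containing $a$ (using Lemma~\ref{lem:basic-generic-delta}(2) to show such closures exist and are $\leqslant \mathbf{M}$), transport $X''$ across $g$ to some abstract $Z \in \mathsf{K_0}$ with $Y' \leqslant Z$, and then apply the extension property to realise $Z$ over $Y'$ inside $\mathbf{M}$. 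Enumerating $\mathbf{M}$ and alternating sides yields an automorphism extending $f$.

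Finally, uniqueness: if $\mathbf{M}'$ is another structure with the three listed properties, the same back-and-forth between $\mathbf{M}$ and $\mathbf{M}'$, starting from $\emptyset \leqslant \emptyset$, produces an isomorphism. The one non-routine point to verify carefully is that the chain-of-$\leqslant$-closed-sets hypothesis together with homogeneity of finite $\leqslant$-closed subsets is enough to guarantee the extension property in $\mathbf{M}'$; this is where I expect the main obstacle, and it is handled by showing that any finite $\leqslant$-closed $B \subseteq \mathbf{M}'$ sits inside some $A_n \leqslant \mathbf{M}'$ of the given chain, then amalgamating $A_n$ with the desired extension $C$ over $B$ via Fact~\ref{free-amalgam} and using homogeneity to realise the amalgam inside $\mathbf{M}'$. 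Once this is in place, the back-and-forth closes and gives $\mathbf{M} \cong \mathbf{M}'$.
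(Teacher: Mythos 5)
Your proposal is correct and is exactly the standard Fra\"iss\'e-style construction for a smooth class (build a countable $\leqslant$-increasing chain realising all $\leqslant$-extension tasks, then derive $\leqslant$-homogeneity and universality, with uniqueness by back-and-forth), which is precisely what the paper appeals to here without writing it out, citing instead the standard references \cite{Wag1,Zthesis}. The one point worth recording explicitly in a write-up is the implicit fact you use when setting $A_{n+1}:=A_n\otimes_B C$, namely that in a free amalgam $B\otimes_A C$ with $A\leqslant B$ and $A\leqslant C$ one has $B\leqslant B\otimes_A C$; this follows from $\delta(BX')=\delta(B)+\delta(X')-\delta(A\cap X')$ for $X'\subseteq C$ together with $A\leqslant C$, and it is what keeps the chain $\leqslant$-increasing.
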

The structure $\mathbf{M}$ that is obtained from Theorem \ref{thm-generic}
is referred to as the \emph{$\left(\mathsf{K_{0}},\leqslant\right)$-generic
structure} and sometimes as\emph{ }an\emph{ ab-initio case of the
Hrushovski construction}s. Throughout the paper $\mathsf{K_{0}}$
and the \emph{$\left(\mathsf{K_{0}},\leqslant\right)$-}generic structure
$\mathbf{M}$ is fixed. 

The following is well-known (cf. \cite{Balshi,Zthesis}).
\begin{thm}
\label{thm:stable-gen} The structure $\mathbf{M}$ is saturated and
$\mbox{Th}\mbox{\ensuremath{\left(\mathbf{M}\right)}}$ is $\omega$-stable
of infinite Morley rank. 
\end{thm}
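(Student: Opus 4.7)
The plan is to set up the standard dimension-theoretic toolkit attached to $\delta$ and then read off each of the three assertions in turn. First, I would introduce the \emph{self-sufficient closure} $\cl A$ of a finite $A\subseteq\mathbf{M}$: since $\delta$ is integer-valued and nonnegative on $\mathsf{K_{0}}$, the function $B\mapsto\delta(B)$ restricted to finite supersets of $A$ inside $\mathbf{M}$ attains a minimum, and Lemma~\ref{lem:basic-generic-delta}(1) forces the collection of minimizers to be closed under intersection, yielding a unique smallest finite $\leqslant$-closed set $\cl A\supseteq A$. Relativizing to a finite $\leqslant$-closed base $B\subseteq\mathbf{M}$ gives $\cl{\bar a/B}$. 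The key classification claim, extracted from Theorem~\ref{thm-generic}, is that for finite $B\leqslant\mathbf{M}$ the type $\mathrm{tp}(\bar a/B)$ is determined by the isomorphism type of $B\cl{\bar a/B}$ as a $\leqslant$-closed $\mathfrak{L}$-structure over $B$.

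Given this, saturation is almost immediate: for a $1$-type $p$ over a finite $B\subseteq\mathbf{M}$, replace $B$ by $\cl B$, realize $p$ by some $a$ in an elementary extension $\mathbf{M}^{\ast}\succ\mathbf{M}$, and set $C:=\cl{Ba}$ computed in $\mathbf{M}^{\ast}$. Then $B\leqslant C$ and $C\in\mathsf{K_{0}}$, so by Theorem~\ref{thm-generic} the identity on $B$ extends to a $\leqslant$-embedding $C\hookrightarrow\mathbf{M}$, whose image of $a$ realizes $p$. Thus $\mathbf{M}$ is $\omega$-saturated, hence (being countable) saturated. For $\omega$-stability: for any countable $A\subseteq\mathbf{M}$, each type in $S_{1}(A)$ is coded by a finite $\leqslant$-closed extension of a finite piece of $A$; since the language is finite and closures are finite, only countably many codes occur, giving $|S_{1}(A)|\leq\aleph_{0}$.

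For infinite Morley rank I would introduce the relative rank
\[
d(\bar a/B):=\min\bigl\{\delta(\bar aC)-\delta(C):B\subseteq C\subseteq_{fin}\mathbf{M},\ C\leqslant\mathbf{M}\bigr\},
\]
verify using Lemma~\ref{lem:basic-generic-delta} that $d$ is invariant on complete types and suitably additive, and show it is upper-semicontinuous, so that $\mathrm{MR}(p)\geq d(p)$. Since a free $n$-tuple over any base has $d$-rank $n\mathfrak{m}\to\infty$, the theory has infinite Morley rank. The main obstacle is the classification step that underpins everything else: one must check that the self-sufficient closure is the \emph{correct} type-theoretic invariant, i.e.\ that any $\leqslant$-isomorphism $B\cl{\bar a/B}\to B\cl{\bar a'/B}$ fixing $B$ pointwise actually arises from an automorphism of $\mathbf{M}$. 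This requires combining the homogeneity clause of Theorem~\ref{thm-generic} with a careful analysis of how closures compose under $\leqslant$-extension, rather than invoking the generic extension property abstractly.
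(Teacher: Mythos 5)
The paper does not actually prove this theorem; it cites \cite{Balshi,Zthesis} and labels it ``well-known.'' So the question is whether your sketch would survive contact with a referee, and there the answer is mixed.

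Your treatment of saturation and $\omega$-stability has the right skeleton: build $\cl{\cdot}$, argue that the first-order type of $\bar a$ over a finite $\leqslant$-closed $B$ is controlled by the isomorphism type of $B\cl{\bar a B}$ over $B$, realize types by strongly embedding the relevant closure via Theorem~\ref{thm-generic}, and count isomorphism types. You are right that the whole load-bearing wall is the type-classification claim, and you are right to flag it; but flagging it is not proving it, and the circularity risk you point at is genuine (one usually establishes it via quantifier elimination down to $\leqslant$-closure formulas, or by proving an appropriate back-and-forth over $\leqslant$-closed sets, before invoking it for saturation). A second, unflagged soft spot: for $\omega$-stability over a countable $A$ it is not immediate that $\mathrm{tp}(a/A)$ is coded by a \emph{single} finite $\leqslant$-closed extension of a \emph{single} finite $A_0\subseteq A$. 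One has to show that the ``base'' $\cl{aA_0}\cap A$ stabilizes as $A_0$ grows --- essentially a finite-basis/non-forking-over-a-finite-set fact --- before the countable coding argument closes.

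The Morley rank step, however, is a genuine error rather than an omitted lemma. From $\mathrm{MR}(p)\geq d(p)$ and the fact that a free $n$-tuple has $d$-rank $n\mathfrak{m}$, you conclude that the theory has infinite Morley rank. This is a non sequitur. The Morley rank of the theory is $\mathrm{MR}(x=x)$ in a \emph{single} variable, and if that were a finite value $k$, the Lascar inequality in an $\omega$-stable theory bounds the rank of any $n$-type by $n\cdot k$; that is perfectly consistent with your lower bound $n\mathfrak{m}$ once $k\geq\mathfrak{m}$. A linear-in-$n$ lower bound on $n$-types cannot witness infinite rank. What one actually needs is an $\omega$-chain: a single $1$-type (typically the generic, $\dim=\mathfrak{m}$, type) sitting inside an infinite strictly descending sequence of definable sets. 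In the uncollapsed class this is supplied by the infinitude of pairwise non-isomorphic $0$-minimally algebraic extensions of a point --- each produces a new definable condition one can impose on a generic element, so $\mathrm{MR}(\text{generic})\geq\omega$, hence $\mathrm{MR}(x=x)\geq\omega$. In fact the sharper statement is $\mathrm{MR}(p)\geq\omega\cdot d(p)$, not $\mathrm{MR}(p)\geq d(p)$; with that bound your conclusion would follow, but the weaker bound you state does not do the job.
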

Suppose $A\subset_{fin}\mathbf{M}$. Define $\dim A:=\delta\left(\cl A\right)$,
referred to as \emph{dimension }of $A$, where $\cl A$ is the smallest
$\leqslant$-closed finite subset of $\mathbf{M}$ that contains $A$.
The uniqueness of $\cl A$ can be proved using Lemma $\ref{lem:basic-generic-delta}(2)$.
The following is also standard (cf. \cite{Wag1}).
\begin{fact}
Suppose $A,B$ are finite subset of $\mathbf{M}$. Then 
\begin{enumerate}
\item $\cl A=\acleq\left(A\right)$; 
\item $\delta\left(\cl A\right)\mbox{\ensuremath{\leq}}\delta\left(A\right)$,
and $\delta\left(B\right)\geq\mbox{\ensuremath{\dim A}}$ for all
$B\supseteq\cl A$;
\item $\dim{AB}+\dim{A\cap B}\leq\dim A+\dim B$.
\end{enumerate}
\end{fact}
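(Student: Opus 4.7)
I will treat the three items in sequence, using the elementary properties of $\delta$ from Lemma~\ref{lem:basic-generic-delta}, the free-amalgamation closure from Theorem~\ref{free-amalgam}, and the homogeneity of $\mathbf{M}$ from Theorem~\ref{thm-generic}.

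For part (1), the inclusion $\cl A\subseteq\acleq(A)$ is obtained by noting that $\cl A$ is setwise invariant under $\Aut(\mathbf{M}/A)$: for any $\sigma\in\Aut(\mathbf{M}/A)$, the image $\sigma(\cl A)$ is again a $\leqslant$-closed finite superset of $A$ of the same cardinality as $\cl A$, so minimality forces $\sigma(\cl A)=\cl A$ and each element of the finite set $\cl A$ has finite orbit over $A$. For the reverse inclusion I must show that any $b\in\mathbf{M}\setminus\cl A$ has infinitely many $A$-conjugates. Putting $D:=\cl{Ab}$, I form the free amalgam $D\otimes_{\cl A}D'$ of $D$ with a disjoint isomorphic copy $D'$ over $\cl A$; Theorem~\ref{free-amalgam} ensures this amalgam belongs to $\mathsf{K}_{0}$ with $\cl A$ still $\leqslant$-closed, and then Theorem~\ref{thm-generic} embeds it over $\cl A$ into $\mathbf{M}$. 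Iterating the construction produces countably many pairwise disjoint (outside $\cl A$) copies $D_{i}$ of $D$ in $\mathbf{M}$, whose distinguished elements $b_{i}$ are distinct realizations of the type of $b$ over $A$.

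Part (2) is a matter of reading off definitions: the minimum of $\delta$ among finite supersets of $A$ in $\mathbf{M}$ is attained at some $\leqslant$-closed $C$ with $\delta(C)\le\delta(A)$, and combining $\cl A\subseteq C$ with Lemma~\ref{lem:basic-generic-delta}(2) applied to $\cl A\leqslant\mathbf{M}$ yields $\delta(\cl A)\le\delta(C)\le\delta(A)$; the second clause is just $\cl A\leqslant\mathbf{M}$ rewritten with $\dim A=\delta(\cl A)$. For part (3) I exploit submodularity of $\delta$: by Lemma~\ref{lem:basic-generic-delta}(2)--(3) the intersection $\cl A\cap\cl B$ is $\leqslant$-closed in $\mathbf{M}$ and contains $A\cap B$, so $\cl{A\cap B}\subseteq\cl A\cap\cl B$ and therefore $\dim{A\cap B}\le\delta(\cl A\cap\cl B)$; analogously $\cl{AB}\subseteq\cl{\cl A\cup\cl B}$ and $\dim{AB}\le\delta(\cl A\cup\cl B)$. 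Adding these inequalities and applying submodularity of $\delta$ from Lemma~\ref{lem:basic-generic-delta}(1) to the pair $\cl A,\cl B$ delivers $\dim{AB}+\dim{A\cap B}\le\dim A+\dim B$.

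The principal obstacle lies in part~(1): one must ensure at each iteration step that the growing free amalgam remains in $\mathsf{K}_{0}$ and that $\cl A$ stays $\leqslant$-closed in it, so that Theorem~\ref{thm-generic} can be invoked to realize the next copy inside $\mathbf{M}$. This is precisely what Theorem~\ref{free-amalgam} guarantees; once that invariance is in place, the rest of the argument reduces to straightforward bookkeeping with Lemma~\ref{lem:basic-generic-delta}.
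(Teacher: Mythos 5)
The paper states this Fact without proof, citing it as ``standard (cf.\ \cite{Wag1})'', so there is no in-text argument to compare against; what you have written is a self-contained derivation, and it is correct.

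A few small remarks on the execution. For the inclusion $\acleq(A)\subseteq\cl A$ in part (1) you appeal to Theorem~\ref{free-amalgam} and then ``embed over $\cl A$''; strictly, Theorem~\ref{thm-generic} only gives $\leqslant$-embeddings and $\leqslant$-homogeneity, so one has to combine the two (realize $\cl{Ab}\otimes_{\cl A}\cl{Ab}'$ as a $\leqslant$-closed subset, then conjugate its copy of $\cl A$ onto the actual $\cl A$ by an automorphism) and then note that at each iteration the already-constructed union of copies remains finite and $\leqslant$-closed, so $\cl A\leqslant$ that union and the next free amalgam is again legal. You also silently use that each factor of a free amalgam over a $\leqslant$-closed base is itself $\leqslant$-closed in the amalgam; this is standard but not literally the content of Theorem~\ref{free-amalgam}. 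In part (2), the inequality $\delta(\cl A)\le\delta(C)$ comes directly from the definition of $\cl A\leqslant\mathbf{M}$ applied to the finite superset $C$, rather than from Lemma~\ref{lem:basic-generic-delta}(2) as cited; the conclusion is the same. In part (3) the chain $\cl{A\cap B}\subseteq\cl A\cap\cl B$ and $\cl{AB}\subseteq\cl{\cl A\cup\cl B}$ followed by submodularity of $\delta$ on $\cl A,\cl B$ is exactly the standard argument. None of these points affects correctness; they are only places where the citations or the intermediate justifications could be tightened.
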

When $X$ is an infinite subset of $\mathbf{M}$, define $\dim X:=\mbox{max}\left\{ \dim A:A\subseteq_{fin}X\right\} $
(cf. \cite{Wag1} for more details). It is clear that $\dim X\leq\dim Y$
when $X\subseteq Y\subseteq\mathbf{M}$. 

The following \emph{geometric closure operator} appears naturally:
Let $X\subseteq M$. Define $\gcl X:=\left\{ m\in\mathbf{M}:\dim{\left(m/A\right)}=0,\mbox{ for some }A\subseteq_{fin}X\right\} $,
where we write $\dim{m/A}$ for $\dim{mA}-\dim A$. Note that $\gcl A$
is an infinite set when $A$ is a finite subset of $\mathbf{M}$;
unlike $\cl A$ which is finite. Moreover $\dim{\gcl A}=\dim A=\delta\left(\cl A\right)$.

Finally we need the following definition (cf. \cite{Hrunew} page
150).
\begin{defn}
Suppose $A,B\subseteq_{fin}\mathbf{M}$ such that $A\cap B=\emptyset$.
We say $B$ is \emph{$0$-algebraic} over $A$ if $\delta(BA)-\delta(A)=0$
and $\delta(B'A)-\delta(A)>0$ for all proper subsets $\emptyset\neq B'\subsetneqq B$.
The set $B$ is called \emph{$0$-minimally algebraic} over $A$ if
there is no proper subset $A'$ of $A$ such that $B$ is $0$-algebraic
over $A'$.
\end{defn}
Fix the following notation for the automorphism groups: Suppose $M$
is a countable first order structure and put $G:=\Aut\left(M\right)$.
Let $S_{\omega}:=Sym\left(\Omega\right)$, where $\Omega$ is the
countable underlying set of $M$. Suppose $X\subseteq M$ and $g\in G$.
We write $g\left[X\right]$ for the image of $X$ under $g$. Then
$G_{X}:=\left\{ g\in G:g\left(x\right)=x,\forall x\in X\right\} $
and $G_{\{X\}}:=\left\{ g\in G:g\left[X\right]=X\right\} $. It is
well-known that $G$ with the point-wise convergence topology is a
closed subgroup of $S_{\omega}$. Suppose $N_{0}\subseteq N_{1}$
are two $\mathfrak{L}$-structures and $g_{0}\in\Aut\left(N_{0}\right)$
and $g_{1}\in\Aut\left(N_{1}\right)$, we write $g_{0}\leqslant g_{1}$
when $g_{1}$ is an extension of $g_{0}$ i.e. $g_{1}\upharpoonright N_{0}=g_{0}$.

\subsection{Main results}

In Section \ref{SECTHM} using the same technique as Lascar in \cite{Lascar},
we prove Theorem \ref{thm:one} that has been suggested in \cite{ZEvansTent}.
This is what we call \emph{almost} \emph{SIP}. In Theorem 5.1.6 and
Corollary 5.1.7 in \cite{Zthesis}, similar results have been shown
for the automorphism groups of almost strongly minimal structures,
and the automorphism groups of generic almost strongly minimal structures. 

Fix $\mathbf{G}:=\Aut\left(\mathbf{M}\right)$. In Section $\ref{SECTHM}$
we prove the following
\begin{thm}
\label{thm:one} Let $H$ be a subgroup of $\mathbf{G}$ with $\left[\mathbf{G}:H\right]<2^{\aleph_{0}}$.
Then there exists $X\subseteq\mathbf{M}$ such that $\mathbf{G}_{X}\leqslant H$
where $X=\gcl A$ for some $A\subseteq_{fin}\mathbf{M}$. 
\end{thm}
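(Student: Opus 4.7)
The plan is to prove the contrapositive: assuming $\mathbf{G}_{\gcl A}\nsubseteq H$ for every finite $A\subseteq_{fin}\mathbf{M}$, I will construct $2^{\aleph_{0}}$ pairwise distinct right-cosets of $H$ in $\mathbf{G}$, contradicting the small-index hypothesis. The strategy is the tree-building method of Lascar \cite{Lascar}: the strongly-minimal pregeometry in the original argument is replaced by the pregeometry induced on $\mathbf{M}$ by the dimension $\dim{\cdot}$ and its geometric closure operator $\gcl{\cdot}$.

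Fix an enumeration $\mathbf{M}=\{m_{i}:i<\omega\}$. I would inductively construct, for each $n<\omega$ and each $s\in 2^{n}$, a finite $\leqslant$-closed set $A_{n}\leqslant\mathbf{M}$ containing $\{m_{0},\dots,m_{n-1}\}$ (so $\bigcup_{n}\gcl{A_{n}}=\mathbf{M}$) together with an automorphism $g_{s}\in\mathbf{G}$, subject to two clauses. \emph{Coherence}: for every $s\in 2^{n}$ one requires $g_{s0}\upharpoonright\gcl{A_{n}}=g_{s1}\upharpoonright\gcl{A_{n}}=g_{s}\upharpoonright\gcl{A_{n}}$, so that for every branch $\sigma\in 2^{\omega}$ the pointwise limit $g_{\sigma}:=\lim_{n}g_{\sigma\upharpoonright n}$ is a well-defined element of $\mathbf{G}$. \emph{Splitting}: at each node $s$ of length $n$, the contrapositive hypothesis supplies some $f_{s}\in\mathbf{G}_{\gcl{A_{n}}}\setminus H$; one sets (say) $g_{s1}:=g_{s0}\cdot f_{s}$, so that the two immediate descendants already lie in distinct right-cosets of $H$. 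Building the tree itself is straightforward given the hypothesis; this uses only that $\mathbf{M}$ is a Fra\"iss\'e-style limit with the amalgamation of Theorem \ref{free-amalgam}, so that partial automorphisms can always be extended to members of $\mathbf{G}$.

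The main obstacle is to ensure that the separation at the first disagreement \emph{persists in the limit}: for any distinct $\sigma,\tau\in 2^{\omega}$ with first disagreement at level $n$, one must rule out $g_{\sigma}^{-1}g_{\tau}\in H$. Note that $g_{\sigma}^{-1}g_{\tau}$ does lie in $\mathbf{G}_{\gcl{A_{n}}}$ by coherence, but the post-branching tails of $g_{\sigma}$ and $g_{\tau}$ could a priori absorb the witness $f_{s}$ back into $H$. Handling this is the technical heart of the proof: following Lascar, I would choose the $f_{s}$ and the subsequent extensions generically, using the $\omega$-stability and saturation of $\mathrm{Th}(\mathbf{M})$ (Theorem \ref{thm:stable-gen}) together with the homogeneity given by Theorem \ref{thm-generic} and free amalgamation over $\leqslant$-closed sets (Theorem \ref{free-amalgam}), so that for every pair $\sigma\ne\tau$ the product $g_{\sigma}^{-1}g_{\tau}$ is forced into a fixed non-trivial coset of $H\cap\mathbf{G}_{\gcl{A_{n}}}$ inside $\mathbf{G}_{\gcl{A_{n}}}$. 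In effect one works not with single witnesses $f_{s}$ but with entire orbits of $\mathbf{G}_{\gcl{A_{n}}}$ on $\mathbf{M}\setminus\gcl{A_{n}}$, exploiting that $\gcl{A_{n}}$ is infinite and $\mathbf{G}_{\gcl{A_{n}}}$-invariant — this is exactly why the statement produces $X=\gcl{A}$ rather than a finite set. Once the separation is established, the map $\sigma\mapsto g_{\sigma}H$ is an injection of $2^{\omega}$ into $\mathbf{G}/H$, contradicting $[\mathbf{G}:H]<2^{\aleph_{0}}$ and completing the proof.
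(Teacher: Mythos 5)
Your proposal takes a genuinely different route from the paper, and it has a real gap at exactly the place you flag as the ``technical heart.'' The paper does not build a binary tree of cosets directly. Instead, Lemma \ref{lem-sym} first produces a countable ``basis-like'' set $B\subseteq\mathbf{M}$: every finite subset of $B$ is $\leqslant$-closed, $\gcl{B}=\mathbf{M}$, and every permutation of $B$ extends to an automorphism of $\mathbf{M}$. The language is then enriched to $\mathfrak{L}^{*}$ (adding functions encoding $0$-algebraicity over $B$ and a predicate naming $B$) so that the restriction map $\Aut(\mathbf{M}^{*})\to\Aut(B)\cong S_{\omega}$ is an isomorphism. The Dixon--Neumann--Thomas theorem (SIP for $S_{\omega}$) then gives a finite $B_{0}\subseteq B$ with $\Aut_{B_{0}}(\mathbf{M}^{*})\leq H$. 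Only afterwards comes a Baire-category ``boost'' using the Polish group $\hat{\mathbf{G}}=\mathbf{G}(\mathcal{P})$, stationary independence from \cite{ZEvansTent}, and the conjugacy of $\mathcal{P}$-generics under $K=\Aut_{B_{0}}(\mathbf{M}^{*})\subseteq H$, yielding $\mathbf{G}_{\gcl{B_{0}}}\leq H$.

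The gap in your sketch: with first disagreement at level $n$, writing $u:=g_{s0}^{-1}g_{\sigma}$ and $v:=g_{s1}^{-1}g_{\tau}$ (both in $\mathbf{G}_{\gcl{A_{n+1}}}\subseteq\mathbf{G}_{\gcl{A_{n}}}$ by coherence), you get $g_{\sigma}^{-1}g_{\tau}=u^{-1}f_{s}v$. Knowing $f_{s}\notin H$ gives no information about $u^{-1}f_{s}v$ unless you control $u$ and $v$ relative to $H$ (for instance $u,v\in H$, which would make $u^{-1}f_{s}v\in H$ equivalent to $f_{s}\in H$). But $H$ is an arbitrary subgroup of small index and, at this point of the argument, nothing whatsoever is known about $H\cap\mathbf{G}_{\gcl{A_{n}}}$; the proposed ``generic'' choices are generic with respect to the model-theoretic data (free amalgamation, saturation, $\omega$-stability), which is a priori orthogonal to the combinatorics of $H$. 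Choosing $f_{s}$ from an entire $\mathbf{G}_{\gcl{A_{n}}}$-orbit does not fix this either, since the obstruction comes from both-sided multiplication by unknown tails, not from the orbit of $f_{s}$. This is precisely the point where the paper leans on the $S_{\omega}$-SIP step: one first secures a genuine subgroup $K\subseteq H$ with $K=\Aut_{B_{0}}(\mathbf{M}^{*})$ large enough to conjugate generics, and only then does the Baire-category argument have traction on $H$. Without that intermediate descent to $S_{\omega}$, the tree construction has no leverage on $H$ and the persistence of separation cannot be arranged.
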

Then in Section \ref{SECLEM}, we show:
\begin{lem}
\label{pro:main} Let $X=\gcl A$ where $A\subseteq_{fin}\mathbf{M}$.
Then: 
\begin{enumerate}
\item $\mathbf{G}_{\{X\}}$ is a clopen subgroup of $\mathbf{G}$, hence
it is a Polish group. 
\item $\mathbf{G}_{\{X\}}$ has small index in $\mathbf{G}$. 
\item If $\mathbf{G}_{\{X\}}$ has SIP, then $\mathbf{G}$ has SIP. 
\item Let $\pi_{X}:\mathbf{G}_{\{X\}}\rightarrow\Aut\left(X\right)$  be
the projection map with $h\mapsto h\upharpoonright X$, then $\pi_{X}$
is a homomorphism which is continuous, surjective and open. 
\end{enumerate}
\end{lem}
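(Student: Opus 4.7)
The plan is to dispatch (1)--(3) by elementary topological--group arguments and to treat the surjectivity of $\pi_X$ in (4) as the main technical step, handled by a back-and-forth that exploits genericity and free amalgamation.

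For (1), the key observation is that $X = \gcl A$ is determined by $A$ and $\mathbf{M}$, so every $g\in\mathbf{G}_A$ satisfies $g[X] = \gcl(g[A]) = \gcl A = X$; hence $\mathbf{G}_A \leqslant \mathbf{G}_{\{X\}}$. Since $\mathbf{G}_A$ is a basic clopen subgroup of $\mathbf{G}$ and $\mathbf{G}_{\{X\}}$ is a union of $\mathbf{G}_A$-cosets, $\mathbf{G}_{\{X\}}$ is clopen, and as a closed subgroup of the Polish group $\mathbf{G}$ it is itself Polish. For (2), the orbit $\mathbf{G}\cdot A$ is countable (it consists of pointwise images of $A$ in the countable structure $\mathbf{M}$), so by orbit--stabilizer $[\mathbf{G}:\mathbf{G}_A]\leqslant\aleph_0$, and the containment in (1) gives $[\mathbf{G}:\mathbf{G}_{\{X\}}]\leqslant\aleph_0 < 2^{\aleph_0}$. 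For (3), assume $\mathbf{G}_{\{X\}}$ has SIP and let $H\leqslant\mathbf{G}$ satisfy $[\mathbf{G}:H] < 2^{\aleph_0}$; the intersection $K := H\cap\mathbf{G}_{\{X\}}$ has $[\mathbf{G}_{\{X\}}:K]\leqslant[\mathbf{G}:H]<2^{\aleph_0}$ via the canonical injection of cosets, so $K$ is open in $\mathbf{G}_{\{X\}}$ by SIP and hence open in $\mathbf{G}$ by (1); since $H\supseteq K$, $H$ itself is open.

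For (4), that $\pi_X$ is a group homomorphism is clear, and continuity is immediate because the preimage of the basic open set $\{\tau\in\Aut(X):\tau(x)=y\}$ of $\Aut(X)$ is $\{h\in\mathbf{G}:h(x)=y\}\cap\mathbf{G}_{\{X\}}$, open in $\mathbf{G}_{\{X\}}$. Once surjectivity is in hand, openness of $\pi_X$ follows from the open-mapping theorem for Polish groups: $\mathbf{G}_{\{X\}}$ is Polish by (1), $\Aut(X)$ is Polish as the automorphism group of the countable structure $X$, and a continuous surjective homomorphism between Polish groups is automatically open.

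The surjectivity of $\pi_X$ is the main obstacle, and I expect it to require the bulk of the work. Given $\sigma\in\Aut(X)$, the plan is to build $\hat\sigma\in\mathbf{G}_{\{X\}}$ extending $\sigma$ by a back-and-forth along an enumeration of $\mathbf{M}$, producing a chain of finite partial isomorphisms $\tau_n:D_n\to R_n$ of $\mathbf{M}$ whose domain and range are $\leqslant$-closed in $\mathbf{M}$ and which satisfy $\tau_n\upharpoonright(D_n\cap X) = \sigma\upharpoonright(D_n\cap X)$ at every stage. The useful preliminary observation is that for any finite $F\subseteq X$ one has $\cl F\subseteq\gcl F\subseteq\gcl X = X$, so the $\leqslant$-closure of $F$ computed in $\mathbf{M}$ already lies in $X$ and coincides with the $\leqslant$-closure of $F$ inside the substructure $X$; consequently $\sigma$ maps finite $\leqslant$-closed subsets of $X$ to finite $\leqslant$-closed subsets of $\mathbf{M}$. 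The delicate step is extending $\tau_n$ by a new element $m\in\mathbf{M}$: the closure $\cl(D_n\cup\{m\})$ typically acquires $X$-points whose images are already dictated by $\sigma$, so the candidate image of $m$ must jointly realize the type forced by $\tau_n$ over $R_n$ and the $\sigma$-pushed type over the new $X$-points. I would produce such an image by amalgamating $\cl(D_n\cup\{m\})$ with a $\leqslant$-closed copy of $\sigma\bigl(\cl(D_n\cup\{m\})\cap X\bigr)$ over their common $X$-part, using the free-amalgamation property (Theorem \ref{free-amalgam}) to keep the amalgam inside $\mathsf{K_{0}}$, and then embedding it into $\mathbf{M}$ over $R_n$ via the genericity of $\mathbf{M}$ in Theorem \ref{thm-generic}.
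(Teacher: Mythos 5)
Your proposal is correct and follows essentially the same route as the paper: parts (1)--(3) use the same containment $\mathbf{G}_{\cl A}\leqslant\mathbf{G}_{\{X\}}$ (you use $\mathbf{G}_A$, which works equally well) and the same coset-intersection argument for (3), and surjectivity of $\pi_X$ is established by the same back-and-forth over $\leqslant$-closed sets using genericity and free amalgamation that the paper isolates as Lemma~\ref{lem:extend}. The only cosmetic difference is in the openness of $\pi_X$: you invoke the open-mapping theorem for Polish groups, whereas the paper argues directly by computing $\pi_X(\mathbf{G}_{X_0}\cap\mathbf{G}_{\{X\}})=\Aut_{X_0}(X)$ on basic opens; both are valid.
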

As mentioned before we adopt the method in \cite{MR1231710} however
here we do not show directly that the structure $\mathbf{M}$ has
ample homogeneous generic automorphisms; the definition of ample homogeneous
generic automorphisms is technical and hence only given in Chapter
$\ref{SEC:4}$ (see Definition $\ref{def:base}$). For our case of
$\mathbf{M}$, it would have been enough to show that $\left(\mathsf{K_{0}},\leqslant\right)$
has the extension property (cf. Definition \ref{def-ext}). As proved
in Corollary 5.1.15 in \cite{Zthesis}, the class $\left(\mathsf{K_{0}},\leqslant\right)$
does not have the extension property (for more details see Remark
\ref{rem-EP}). However, we prove the following theorem in Section
\ref{SEC:4}.
\begin{thm}
\label{thm:main} Let $\hat{\mathbf{M}}:=\gcl{\emptyset}$ and $\mathsf{C}=\left\{ A\in\mathsf{K_{0}}:\delta\left(A\right)=0\right\} $.
The class $\left(\mathsf{C},\leqslant\right)$ has the extension property.
Therefore, $\hat{\mathbf{M}}$ has ample homogeneous generic automorphisms
and $\Aut{\left(\hat{\mathbf{M}}\right)}$ has SIP. 
\end{thm}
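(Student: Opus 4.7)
The statement breaks into three parts: (i) the class $(\mathsf{C},\leqslant)$ has the extension property; (ii) $\hat{\mathbf{M}}$ consequently has ample homogeneous generic automorphisms; (iii) $\Aut{(\hat{\mathbf{M}})}$ consequently has SIP. My plan is to treat them in this order, with essentially all the real content living in (i).

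For (i), fix $A\in\mathsf{C}$ and the finite set $\mathcal{F}$ of partial isomorphisms of $A$ that we wish to extend. I would first invoke the Herwig--Hrushovski theorem for finite graphs (cf.\ \cite{HruEP,MR1357282}) to produce a finite graph $B_{0}\supseteq A$ in which every element of $\mathcal{F}$ extends to a full automorphism; let $G\leq\Aut{(B_{0})}$ be the finite group generated by these chosen extensions. Now $B_{0}$ generally neither lies in $\mathsf{K_{0}}$ nor satisfies $A\leqslant B_{0}$ or $\delta(B_{0})=0$, so the bulk of the construction is to enlarge $B_{0}$ to some $B\supseteq B_{0}$ with $B\in\mathsf{C}$, $A\leqslant B$, and a $G$-action on $B$ extending that on $B_{0}$. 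My approach would be an iterative $G$-equivariant augmentation: for each subset $X$ of the current structure whose $\delta$-value obstructs either $\mathsf{K_{0}}$-membership or the condition $A\leqslant B$, attach a $0$-algebraic configuration over the whole $G$-orbit of $X$, chosen so that the resulting structure remains obtainable from $\mathsf{K_{0}}$ pieces via the free amalgamation of Theorem \ref{free-amalgam}. Because each attachment is $G$-invariant, the group action lifts to the larger structure; because each attachment is $0$-algebraic, the $\delta$-values over $A$ are not decreased, and no new offending subsets are created. A finite number of such steps yields $B\in\mathsf{K_{0}}$ with $A\leqslant B$, and a final $0$-algebraic cap over $A$ brings $\delta(B)$ down to $0$.

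The main obstacle is combining $G$-equivariance with the $\delta$-bookkeeping, and the restriction to $\mathsf{C}$ is precisely what makes it feasible: because $\delta(A)=0$, the condition $A\leqslant B$ only requires $\delta(X)\geq 0$ for each $X\supseteq A$, and these values can always be lifted by $0$-algebraic gadgets. For a general $A\in\mathsf{K_{0}}$ with $\delta(A)>0$, partial isomorphisms of $A$ can interact with subsets $A'\subsetneq A$ of smaller $\delta$ in ways that no such gadget can repair, which is the source of the failure of the extension property for $(\mathsf{K_{0}},\leqslant)$ recorded in Remark \ref{rem-EP}. I expect the technical core of the argument to be a careful check that this iterative augmentation terminates without breaking $G$-equivariance at any stage.

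Once (i) is established, (ii) follows by a standard induction on the arity $n$ of the tuple of automorphisms, feeding the extension property into the scheme of Section \ref{SEC:4} to obtain generic $n$-tuples in $\Aut{(\hat{\mathbf{M}})}^{n}$ for every $n$; and (iii) follows from the Kechris--Rosendal / Hodges--Hodkinson--Lascar--Shelah theorem that any Polish group with ample homogeneous generic automorphisms has SIP, together with the observation that $\Aut{(\hat{\mathbf{M}})}$ is Polish by Lemma \ref{pro:main}(1) applied with $A=\emptyset$. Thus the entire weight of the proof rests on (i), and the rest is routine invocation of existing machinery.
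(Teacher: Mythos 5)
Your treatment of parts (ii) and (iii) matches the paper, which likewise reduces to Theorems 2.9 and 5.3 of \cite{MR1231710}, substituting Lemma \ref{lem:extend} for the $\omega$-categoricity hypothesis; that part is routine. The gap is in part (i). The Herwig--Hrushovski extension theorem for finite graphs gives a finite graph $B_{0}\supseteq A$ on which the partial maps extend to automorphisms, but this $B_{0}$ will in general fail the hereditary inequality $\delta\geq 0$ that defines $\mathsf{K}_{0}$: the covering and blow-up constructions behind that theorem are not $\delta$-aware, and $\mathsf{K}_{0}$ is cut out by excluding an infinite family of dense subgraphs, which also puts it outside the scope of the finite-forbidden-configuration refinements of Herwig's theorem that you might reach for next. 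Your proposed repair --- iteratively attaching $0$-algebraic gadgets --- cannot correct this defect: if $X\subseteq B_{0}$ already has $\delta(X)<0$, then $\delta(X)=\mathfrak{m}\left|X\right|-\left|\mathcal{R}(X)\right|$ is a purely local quantity, unaffected by adjoining new points outside $X$; the structure is simply not in $\mathsf{K}_{0}$, and no amount of $0$-algebraic augmentation will put it there. So the first step of your argument produces an object you cannot use.

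The paper's proof of Lemma \ref{lem:ep-im} avoids this by building $B$ bottom-up rather than by post-processing a black-box extension. For $A\in\mathsf{C}$ one has a canonical stratification: $A$ decomposes into the minimally $\leqslant$-closed pieces $\mathcal{E}(A)$ and into maximal connected zero-sets, each of which carries a finite ``level of complexity'' measuring how many rounds of attaching $0$-algebraic sets over lower layers are needed to rebuild it from $\mathcal{E}(A)$. Because the given $e_{i}$ are isomorphisms between $\leqslant$-closed subsets of $A$, they respect this stratification, and $B$ is assembled layer by layer: a base $B_{0}$ consists of enough disjoint isomorphic copies of the $E_{j}\in\mathcal{E}(A)$ to accommodate the induced permutations (the count involves the orbit lengths $\mathfrak{o}_{i}$ of the $e_{i}$), and then at each higher level one freely amalgamates on additional $0$-algebraic configurations so that the structure has the ``$i$-uniform algebraicity'' needed to extend the automorphisms further up. Every intermediate structure is manifestly in $\mathsf{C}$, by Theorem \ref{free-amalgam}, so the $\mathsf{K}_{0}$-membership your approach would have to recover a posteriori is maintained by construction throughout. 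This stratified, delta-aware construction --- rather than any appeal to Herwig--Hrushovski --- is the idea your proposal is missing, and it is exactly what the restriction to $\delta(A)=0$ buys: that restriction is what makes the layered decomposition of $A$ available, not merely what simplifies the bookkeeping for $A\leqslant B$ as you suggest.
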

Moreover, using a similar technique one can show the following theorem\footnote{Theorem \ref{thm:main-1} was suggested by David M. Evans after pointing
out some problems in earlier versions of the proof of Theorem \ref{thm-SIP}.}:
\begin{thm}
\label{thm:main-1} Suppose $A\subseteq_{fin}\mathbf{M}$ and let
$X=\gcl A$. Then $\Aut_{A}\left(X\right)$ has SIP and hence $\Aut\left(X\right)$
has SIP. 
\end{thm}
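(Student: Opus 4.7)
The plan is to run the proof of Theorem \ref{thm:main} relative to the base $\bar A := \cl A$, and then transfer the small index property along the tower of open subgroups
\[
\Aut_{\bar A}(X) \,\leq\, \Aut_A(X) \,\leq\, \Aut(X).
\]
Since $\bar A = \acleq(A)$ is finite, every $g \in \Aut_A(X)$ permutes $\bar A$, so $\Aut_{\bar A}(X)$ is a finite-index open subgroup of $\Aut_A(X)$; similarly $\Aut_A(X)$ is open in $\Aut(X)$ as the pointwise stabilizer of the finite set $A$. Given SIP for $\Aut_{\bar A}(X)$, a standard intersection argument works at each level: if $H \leq \Aut_A(X)$ has small index, then $[\Aut_{\bar A}(X) : H \cap \Aut_{\bar A}(X)] \leq [\Aut_A(X):H]$ is small, so $H \cap \Aut_{\bar A}(X)$ is open by assumption; openness of $\Aut_{\bar A}(X)$ in $\Aut_A(X)$ then makes $H$ open. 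The same argument passes SIP from $\Aut_A(X)$ up to $\Aut(X)$.

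It therefore suffices to prove SIP for $\Aut_{\bar A}(X)$, which I would do by relativizing Theorem \ref{thm:main}. Define
\[
\mathsf{C}_{\bar A} := \{\, B \in \mathsf{K}_0 : \bar A \subseteq B,\ \bar A \leqslant B \text{ and } \delta(B) = \delta(\bar A)\, \},
\]
viewed as a class of finite structures over the fixed base $\bar A$. The equation $\delta(B) = \delta(\bar A)$ is precisely the condition that every element of $B \setminus \bar A$ lies in the $0$-algebraic extension of $\bar A$, so $X = \gcl{\bar A}$ is the union of a chain of $\leqslant$-closed members of $\mathsf{C}_{\bar A}$; together with Theorem \ref{free-amalgam} (which shows that free amalgamation over $\bar A$ preserves $\mathsf{C}_{\bar A}$), this exhibits $X$ as the $(\mathsf{C}_{\bar A}, \leqslant)$-generic over $\bar A$. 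The key step is then the \emph{relative extension property} for $(\mathsf{C}_{\bar A}, \leqslant)$: every $B \in \mathsf{C}_{\bar A}$ together with every finite family of partial isomorphisms of $B$ fixing $\bar A$ pointwise embeds into some $B \leqslant C \in \mathsf{C}_{\bar A}$ in which each partial isomorphism extends to a full automorphism of $C$. Once this is established, the argument of Section \ref{SEC:4} that produces ample homogeneous generic automorphisms for $\hat{\mathbf{M}}$ transfers verbatim to the pair $(X, \bar A)$, and the method of \cite{MR1231710} gives SIP for $\Aut_{\bar A}(X)$.

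The main technical obstacle is verifying the relative extension property. I expect the proof of Theorem \ref{thm:main} to go through with only cosmetic changes, since passing from $\mathsf{C}$ to $\mathsf{C}_{\bar A}$ merely shifts $\delta$ above $\bar A$ by the constant $\delta(\bar A)$, and the Herwig-style construction together with the $0$-algebraic corrections used to return to $\mathsf{K}_0$ can be carried out entirely above $\bar A$. The one point requiring care is compatibility with the base: one must check that the auxiliary structure produced by Herwig's theorem can be amalgamated freely over $\bar A$ without disturbing $\bar A$ itself, and that the final correction preserves both $\bar A \leqslant C$ and $\delta(C) = \delta(\bar A)$. This compatibility is exactly what the free-amalgamation-over-$\bar A$ statement of Theorem \ref{free-amalgam} supplies in the correction phase.
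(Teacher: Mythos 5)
Your proposal is essentially correct and takes the same route as the paper: relativize the extension-property argument to work over a fixed base, show the resulting class is an amalgamation base, and then pass SIP up the tower of open subgroups to $\Aut(X)$. The paper's class $\mathscr{C}_{A}:=\left\{ B\leqslant X:A\subseteq B\right\}$ coincides with your $\mathsf{C}_{\bar A}$ once one notes that $B\leqslant X$ with $A\subseteq B$ already forces $\cl A\subseteq B$ and $\delta\left(B\right)=\delta\left(\cl A\right)$; the paper states SIP directly for $\Aut_{A}\left(X\right)$ rather than factoring through $\Aut_{\bar A}\left(X\right)$, but this is cosmetic since the two differ by a finite-index open subgroup.

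One inaccuracy worth flagging: your last paragraph describes the extension property as coming from a ``Herwig-style construction'' with subsequent $0$-algebraic corrections. That is not how Lemma \ref{lem:ep-im} actually proceeds. Herwig's theorem cannot be used as a black box here, since the full class $\left(\mathsf{K_{0}},\leqslant\right)$ does \emph{not} have EP (Remark \ref{rem-EP}); instead, the paper proves EP for $\left(\mathsf{C},\leqslant\right)$ by a direct inductive construction on the level of complexity of maximal connected zero-sets, building $B_{0}\leqslant B_{1}\leqslant\cdots\leqslant B_{\mathfrak{l}}$ with the accompanying automorphisms stage by stage. Your substantive claim --- that this construction can be carried out entirely above $\bar A$, using free amalgamation over $\bar A$ to preserve both $\bar A\leqslant C$ and $\delta\left(C\right)=\delta\left(\bar A\right)$ --- is exactly the relativization the paper invokes, so the mischaracterization does not affect the correctness of your plan.
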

Now by combining Theorem \ref{thm:one}, Theorem \ref{thm:main-1}
and Lemma \ref{pro:main} we conclude the following:
\begin{thm}
\label{thm-SIP} Suppose $M$ is an ab-initio generic structure that
is obtained from a pre-dimension function with rational coefficients.
Suppose $H$ is a subgroup of $G=\Aut{(M)}$ of small index. Then
$H$ is an open subgroup of $G$. \end{thm}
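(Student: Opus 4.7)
The plan is to deduce Theorem \ref{thm-SIP} purely from the three main inputs already stated: Theorem \ref{thm:one}, Lemma \ref{pro:main}, and Theorem \ref{thm:main-1}. A pre-dimension with rational coefficients $\alpha/\beta$ gives rise to the same $\leqslant$-relation and the same class $\mathsf{K_{0}}$ as the scaled pre-dimension with integer values, so (after this standard rescaling) we may assume $M=\mathbf{M}$ fits the framework fixed in the introduction and $G=\mathbf{G}$.

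Let $H\leq\mathbf{G}$ with $[\mathbf{G}:H]<2^{\aleph_{0}}$. First I would apply Theorem \ref{thm:one} to produce a finite $A\subseteq\mathbf{M}$ such that $\mathbf{G}_{X}\leq H$, where $X=\gcl{A}$. Next, set $H':=H\cap\mathbf{G}_{\{X\}}$. By Lemma \ref{pro:main}(2), $\mathbf{G}_{\{X\}}$ has small index in $\mathbf{G}$, and the standard inequality $[\mathbf{G}_{\{X\}}:H']\leq[\mathbf{G}:H]$ then gives $[\mathbf{G}_{\{X\}}:H']<2^{\aleph_{0}}$. Crucially, $\ker(\pi_{X})=\mathbf{G}_{X}\leq H'$, so $H'=\pi_{X}^{-1}(\pi_{X}(H'))$ and consequently
\[
[\Aut(X):\pi_{X}(H')]=[\mathbf{G}_{\{X\}}:H']<2^{\aleph_{0}}.
\]

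Now Theorem \ref{thm:main-1} says $\Aut(X)$ has SIP, so $\pi_{X}(H')$ is open in $\Aut(X)$. Continuity of $\pi_{X}$ (Lemma \ref{pro:main}(4)) turns this into openness of $H'=\pi_{X}^{-1}(\pi_{X}(H'))$ inside $\mathbf{G}_{\{X\}}$, and since $\mathbf{G}_{\{X\}}$ is clopen in $\mathbf{G}$ by Lemma \ref{pro:main}(1), $H'$ is open in $\mathbf{G}$. Finally, $H$ is a union of left cosets of its subgroup $H'$, each of which is open, so $H$ itself is open. This is precisely the content abstracted in Lemma \ref{pro:main}(3), applied here with the SIP of $\mathbf{G}_{\{X\}}$ supplied (on the subclass of subgroups containing $\mathbf{G}_{X}$) by Theorem \ref{thm:main-1} through $\pi_{X}$.

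The synthesis itself is essentially bookkeeping; no genuine obstacle arises at this stage. The real content sits in the inputs: Theorem \ref{thm:one}, which via Lascar's strategy isolates a finite parameter set $A$ so that $\mathbf{G}_{\gcl{A}}\leq H$, and Theorem \ref{thm:main-1}, which bypasses the failure of the extension property for $(\mathsf{K_{0}},\leqslant)$ by establishing it on the subclass $(\mathsf{C},\leqslant)$ and then deriving ample homogeneous generic automorphisms for $\gcl{A}$. The one delicate point in the synthesis is making sure to intersect with $\mathbf{G}_{\{X\}}$ (so that $\ker\pi_{X}$ is absorbed into $H'$, allowing the index to transfer exactly under $\pi_{X}$); without this observation one cannot conclude that $\pi_{X}(H')$ has small index in $\Aut(X)$.
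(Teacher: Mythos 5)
Your argument coincides with the paper's own proof step for step: invoke Theorem~\ref{thm:one} to get $\mathbf{G}_{X}\leq H$ with $X=\gcl{A}$, pass to $H'=H\cap\mathbf{G}_{\{X\}}$, use surjectivity of $\pi_{X}$ and $\ker\pi_{X}=\mathbf{G}_{X}\leq H'$ to transfer the small-index condition to $\pi_{X}(H')\leq\Aut(X)$, apply Theorem~\ref{thm:main-1} there, and pull openness back through $\pi_{X}$ and the clopen inclusion $\mathbf{G}_{\{X\}}\leq\mathbf{G}$. You are in fact a touch more careful than the paper's text in one place: the paper stops at ``$H'$ is open in $G$,'' leaving implicit the final (trivial) observation that $H$, being a union of cosets of its open subgroup $H'$, is itself open, which you spell out.
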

\begin{proof}
By Theorem \ref{thm:one} there is a finite subset $A$ of $M$ such
that $G_{X}\leq H$ where $X=\gcl A$. Let $H':=H\cap G_{\{X\}}$.
It is clear that $G_{X}\leq H'$ and $\left[G_{\{X\}}:H'\right]\leq\aleph_{0}$.
Now let $\pi_{X}:G_{\{X\}}\rightarrow\Aut{(X)}$ be the projection
map that has been defined in Lemma \ref{pro:main}. By Lemma \ref{pro:main}(4),
the projection map $\pi_{X}$ is surjective. Therefore, $\pi_{X}(H')$
is a small index subgroup of $\Aut(X)$. From Theorem \ref{thm:main-1},
we know that $\Aut(X)$ has SIP. Therefore $\pi_{X}(H')$ is open
in $\Aut{(X)}$. Since $\pi_{X}$ is continuous, $\pi_{X}^{-1}(\pi_{X}(H'))$
is open in $G_{\{X\}}$. Note that $\pi_{X}^{-1}(\pi_{X}(H'))=H'\mbox{ker}\left(\pi_{X}\right)$.
By our assumption $\mbox{ker}\left(\pi_{X}\right)=G_{X}\leqslant H'$
and hence $\pi_{X}^{-1}(\pi_{X}(H'))=H'$. Therefore $H'$ is open
in $G_{\{X\}}$ and then open in $G$. \end{proof}
\begin{acknowledgement*}
\noindent A major part of this paper was developed while the author
was staying in Mathematisches Institut, WWUniversität Münster in Germany
in winter semester 2014/2015. The author would like to thank David
M. Evans and Katrin Tent for suggestions, comments and corrections
on earlier versions of this paper. The author would also like to thank
the anonymous referee of the first submitted version of the paper for careful
reading and thoughtful comments.
\end{acknowledgement*}

\section{The almost small index property}

\label{SECTHM} In this section we first prove the following
\begin{lem}
\label{lem-sym} Let $\mathbf{M}$ be the $(\mathsf{K_{0}},\leqslant)$-generic
structure. There exists a countable subset $B\subseteq M$ such that: 
\begin{enumerate}
\item $\gcl B=\mathbf{M}$; 
\item $B_{0}\leqslant\mathbf{M}$ for all $B_{0}\subset_{fin}B$; 
\item Every permutation of $B$ extends to an automorphism of $\mathbf{M}$. 
\end{enumerate}
\end{lem}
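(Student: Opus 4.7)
The plan is to construct $B$ as an ascending union $\bigcup_{n} B_n$ of finite, edge-free, $\leqslant$-closed subsets of $\mathbf{M}$, where stage $n$ ``captures'' the $n$-th element of a fixed enumeration $\mathbf{M}=\{m_0,m_1,\ldots\}$ by forcing $m_n\in\gcl{B_{n+1}}$. Start with $B_0=\emptyset$. At the inductive step, given $B_n$ finite, edge-free, with $B_n\leqslant\mathbf{M}$, set $C=\cl{B_n m_n}$ and $d=\dim{m_n/B_n}=\delta(C)-\delta(B_n)\geq 0$. I construct an abstract $\leqslant$-extension $D\supseteq C$ by adjoining $d$ fresh vertices $b_1,\ldots,b_d$ that are pairwise non-adjacent, non-adjacent to $B_n$, and that each carry exactly one edge into $C\setminus B_n$. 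The endpoints are chosen so that for every $Z\subseteq C\setminus B_n$ the number of $b_i$ attached to $Z$ is at most $f(Z):=\delta(B_n\cup Z)-\delta(B_n)$; since $f$ is integer-valued, non-decreasing and submodular by Lemma~\ref{lem:basic-generic-delta}(1), with $f(C\setminus B_n)=d$, such a distribution exists as an integer point of the base polytope of the polymatroid defined by $f$.

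A direct $\delta$-calculation then shows $D\in\mathsf{K_0}$, $C\leqslant D$, and moreover $B_{n+1}:=B_n\cup\{b_1,\ldots,b_d\}\leqslant D$, with $\delta(D)=\delta(B_n)+d\mathfrak{m}=\delta(B_{n+1})$. By the genericity of $\mathbf{M}$ (Theorem~\ref{thm-generic}), I realise $D$ as a $\leqslant$-closed extension of $C$ inside $\mathbf{M}$. Then $B_{n+1}$ is edge-free by design, $B_{n+1}\leqslant D\leqslant\mathbf{M}$ yields $B_{n+1}\leqslant\mathbf{M}$ by Lemma~\ref{lem:basic-generic-delta}(3), and the inclusion $\cl{B_{n+1}m_n}\subseteq D$ gives $\dim{B_{n+1}m_n}\leq\delta(D)=\delta(B_{n+1})=\dim{B_{n+1}}$, so $m_n\in\gcl{B_{n+1}}$. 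Setting $B:=\bigcup_n B_n$, property (1) is immediate, and (2) follows because any finite $B'\subseteq B$ sits inside some $B_n$, and since $B_n$ is edge-free, $\delta$ scales linearly with cardinality on subsets of $B_n$, giving $B'\leqslant B_n\leqslant\mathbf{M}$ and hence $B'\leqslant\mathbf{M}$ by transitivity.

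For (3), any permutation $\pi$ of $B$ restricts on each finite $B'\subseteq B$ to an isomorphism between the edge-free $\leqslant$-closed subsets $B'$ and $\pi[B']$ of $\mathbf{M}$, since $\pi[B']\subseteq B$ is also edge-free of the same cardinality. A standard back-and-forth argument over simultaneous enumerations of $B$ and $\mathbf{M}$, invoking the $\leqslant$-closed homogeneity of $\mathbf{M}$ (Theorem~\ref{thm-generic}) at each finite step, extends $\pi$ to an automorphism of $\mathbf{M}$. The main obstacle of the argument is the combinatorial edge distribution in the inductive step: one must force enough edges from the $b_i$ into $C\setminus B_n$ to drop the dimension of $m_n$ to zero, while simultaneously avoiding concentrating edges on ``thin'' subsets of $C\setminus B_n$ that would destroy $B_{n+1}\leqslant\mathbf{M}$. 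Submodularity of $\delta$ is exactly what reconciles these two constraints.
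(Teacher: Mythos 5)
Your proof is correct and reaches the same conclusion, but the inductive step takes a genuinely different route from the paper's. The paper adds a \emph{single} new basis element $b_{\mathfrak{s}+1}$ per stage: setting $B_{\mathfrak{s}}:=\cl{m_{\mathfrak{s}}b_{0}\cdots b_{\mathfrak{s}}}$ and $l:=\dim{m_{\mathfrak{s}}/b_{0}\cdots b_{\mathfrak{s}}}$, it builds a chain $c_{1},\ldots,c_{\mathfrak{m}-l}$ of pre-dimension-$1$ extensions over $B_{\mathfrak{s}}$ and takes $b_{\mathfrak{s}+1}:=c_{\mathfrak{m}-l}$, then asserts $\delta(c_{\mathfrak{m}-l}/b_{0}\cdots b_{\mathfrak{s}})=\mathfrak{m}$, i.e.\ that $c_{\mathfrak{m}-l}$ carries no edges into $b_{0}\cdots b_{\mathfrak{s}}$. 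You instead add $d:=\dim{m_{n}/B_{n}}$ fresh vertices at once, each with exactly one edge into $C\setminus B_{n}$, and let a submodularity argument distribute the endpoints. Your route is actually cleaner in one respect: the paper's single-element chain needs the $\mathfrak{m}-1$ edges of $c_{\mathfrak{m}-l}$ to avoid $b_{0}\cdots b_{\mathfrak{s}}$ entirely, which forces $\left|B_{\mathfrak{s}}\setminus b_{0}\cdots b_{\mathfrak{s}}\right|\geq l$, and that bound is not automatic (e.g.\ with $\mathfrak{m}\geq3$, $l=\mathfrak{m}-1$, and $B_{\mathfrak{s}}=b_{0}\cdots b_{\mathfrak{s}}\cup\{m_{\mathfrak{s}}\}$ it fails). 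Distributing $d$ new vertices according to the capacity function $f$ avoids this entirely.

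Two points deserve tightening in your write-up. First, your claim that $f(Z)=\delta(B_{n}Z)-\delta(B_{n})$ is non-decreasing is not in general true: since only $B_{n}\leqslant C$ (not $B_{n}Z\leqslant C$), you get $f\geq0$ and submodularity from Lemma~\ref{lem:basic-generic-delta}(1), but monotonicity can fail. The fix is to pass to the monotone envelope $f^{*}(Z):=\min\{f(Z'):Z\subseteq Z'\subseteq C\setminus B_{n}\}$, which is an integer-valued polymatroid rank function with $f^{*}(C\setminus B_{n})=d$ and $f^{*}\leq f$; an integer point of its base polytope exists and satisfies your constraints. Second, your dispatch of item (3) as a ``standard back-and-forth'' glosses over the part that actually needs the construction of $B$: when extending the partial isomorphism past a finite $B^{\mathfrak{j}}\subseteq B$ to capture a new $m\in\gcl{B^{\mathfrak{j}}}$, one must choose the image of $\cl{B^{\mathfrak{j}}m}\setminus B^{\mathfrak{j}}$ disjoint from $B$, and one needs $\cl{B^{\mathfrak{j}}m}\cap B=B^{\mathfrak{j}}$ together with $\cl{B^{\mathfrak{j}}m}\cup C\leqslant\mathbf{M}$ for every finite $B^{\mathfrak{j}}\subseteq C\subseteq B$; the latter is exactly where the edge-freeness and $\leqslant$-closedness of $B$ enter, and the paper records this explicitly. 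These are gaps of exposition rather than of substance; the underlying idea is sound and the construction genuinely differs from (and in one respect improves on) the paper's.
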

\begin{proof}
Fix an enumeration $\left<m_{i}:i\in\omega\right>$ of elements of
$\mathbf{M}$. We start finding elements $b_{i}$ in $\mathbf{M}$
for $i\in\omega$ inductively such that: 
\begin{enumerate}
\item $b_{0}\cdots b_{i}$ is $\leqslant$-closed in $\mathbf{M}$ for $i\in\omega$; 
\item $m_{i}\in\gcl{b_{0}\cdots b_{i+1}}$ for $i\in\omega$. 
\end{enumerate}
Choose $b_{0}$ to be a $\leqslant$-closed element in $\mathbf{M}$.
Assume $b_{i}$'s is chosen for $i\leq\mathfrak{s}$ and they satisfy
the conditions above. If $m_{\mathfrak{s}}\in\gcl{b_{0}\cdots b_{\mathfrak{s}}}$
then let $b_{\mathfrak{s}+1}$ be an element that $\dim{b_{\mathfrak{s}+1}/b_{0}\cdots b_{\mathfrak{s}}}=\mathfrak{m}$
(i.e. $b_{0}\cdots b_{\mathfrak{s}}\leqslant b_{0}\cdots b_{\mathfrak{s}}b_{\mathfrak{s}+1}\leqslant\mathbf{M}$
and $\mathcal{R}^{\mathbf{M}}\left(b_{\mathfrak{s}+1};b_{0}\cdots b_{\mathfrak{s}}\right)=\emptyset$).
Otherwise $1\leq\dim{m_{\mathfrak{s}}/b_{0}\cdots b_{\mathfrak{s}}}\leq\mathfrak{m}$.
If $\dim{m_{\mathfrak{s}}/b_{0}\cdots b_{\mathfrak{s}}}=\mathfrak{m}$,
then $b_{0}\cdots b_{\mathfrak{s}}m_{\mathfrak{s}}$ is a $\leqslant$-closed
set and in this case let $b_{\mathfrak{s}+1}=m_{\mathfrak{s}}$. Suppose
now $\dim{m_{\mathfrak{s}}/b_{0}\cdots b_{\mathfrak{s}}}=l$ where
$1\leq l<\mathfrak{m}$. Let $B_{\mathfrak{s}}:=\cl{m_{\mathfrak{s}}b_{0}\cdots b_{\mathfrak{s}}}$
and then let $C=B_{\mathfrak{s}}\cup\left\{ c_{1},\cdots,c_{\mathfrak{m}-l}\right\} $
 be an $\mathfrak{L}$-structure such that that $\delta\left(c_{1}/B_{\mathfrak{s}}\right)=\delta\left(c_{i}/B_{\mathfrak{s}}c_{i-1}\cdots c_{1}\right)=1$
for $1<i\leq\mathfrak{m}-l$. Then $\delta\left(C/B_{\mathfrak{s}}\right)=\mathfrak{m}-l$,
$C\in\mathsf{K_{0}}$ and $B_{\mathfrak{s}}\leqslant C$. By Theorem
\ref{thm-generic}, we can strongly embed $C$ over $B_{\mathfrak{s}}$
inside $\mathbf{M}$. With abuse of notation, we write it again by
$C$. Then by our assumptions it follows that $\delta\left(c_{\mathfrak{m}-l}/b_{0}\cdots b_{\mathfrak{s}}\right)=\dim{c_{\mathfrak{m}-l}/b_{0}\cdots b_{\mathfrak{s}}}=\mathfrak{m}$.
Choose $b_{\mathfrak{s}+1}$ be $c_{\mathfrak{m}-l}$. In all the
above cases $\dim{m_{\mathfrak{s}}/b_{0}\cdots b_{\mathfrak{s}+1}}=0$
and hence $m_{\mathfrak{s}}\in\gcl{b_{0}\cdots b_{\mathfrak{s}+1}}$.

Suppose $\left<b_{i}:i<\omega\right>$ satisfies Conditions $(1)$
and $(2)$. Let $B:=\left\{ b_{i}:i<\omega\right\} $ and suppose
$\gamma$ is a permutation of $B$. We want to show that $\gamma$
extends to an automorphism of $\mathbf{M}$. This is feasible by a
back and forth argument in the following manner. We build finite partial
isomorphisms $g_{0}\leqslant g_{1}\leqslant\cdots$ between $\leqslant$-closed
subsets of $\mathbf{M}$ and, then $\check{\gamma}:=\bigcup_{i<\omega}g_{i}$
will be the desired automorphism of $\mathbf{M}$ that extends $\gamma$.
We only explain how to define $g_{0}$ and the forth step of extending
$g_{0}$ to $g_{1}$. The back step can be done with a similar argument. 

Suppose $m_{\mathfrak{i}}$ is the first element in the enumeration
of $\mathbf{M}$ that $m_{\mathfrak{i}}\notin B$. Let $\mathfrak{j}$
be the smallest index in the sequence $\left<b_{i}:i<\omega\right>$
that $m_{\mathfrak{i}}\in\gcl{b_{0}\cdots b_{\mathfrak{j}}}$. Write
$B^{\mathfrak{j}}$ for the set $\left\{ b_{i}:i\leq\mathfrak{j}\right\} $
and let $g_{0}:=\gamma\upharpoonright B^{\mathfrak{j}}$ and $B_{\mathfrak{j}}:=\cl{B^{\mathfrak{j}}m_{\mathfrak{\mathfrak{i}}}}$.
Let $D$ be an isomorphic copy of $B_{\mathfrak{j}}$ over $g_{0}\left[B^{\mathfrak{j}}\right]$
such that $\left(D\backslash g_{0}\left[B^{\mathfrak{j}}\right]\right)\cap\left(B_{\mathfrak{j}}\backslash B^{\mathfrak{j}}\right)=\emptyset$.
Extend $g_{0}$ to $g_{1}$ such that $g_{1}\left[B_{\mathfrak{j}}\right]=D$.
Notice that $B^{\mathfrak{j}}\leqslant B_{\mathfrak{j}}$ and $B_{\mathfrak{j}}C\leqslant\mathbf{M}$
for all $B^{\mathfrak{j}}\subseteq C\subseteq_{fin}B$ and therefore
$g_{1}\cup\gamma$ is a partial isomorphism. We can continue building
partial isomorphism $g_{i}$'s for $i\in\omega$ and, then $\check{\gamma}$
will be the desired automorphism of $\mathbf{M}$ that extends $\gamma$
and hence, the sequence $\left<b_{i}:i\in\omega\right>$ satisfies
Condition (1-3).
\end{proof}

\begin{proof}[Proof of Theorem \ref{thm:one}]
 Suppose $B$ is a countable set that is obtained from Lemma \ref{lem-sym}.
Our aim is to enrich the language $\mathfrak{L}$ to $\mathfrak{L^{*}}:=\mathfrak{L}\cup\mathfrak{F}\cup\left\{ \mathcal{I}\right\} $
where $\mathfrak{F}$ is a countable set of functions and $\mathcal{I}$
is a unary predicate such that: 
\begin{enumerate}
\item $\left\langle B_{0}\right\rangle _{\mathfrak{F}}=\gcl{B_{0}}$ for
all $B_{0}\subseteq B$; 
\item $\mathfrak{F}$ is \emph{compatible} with permutations of $B$: For
each permutation $\beta$ of $B$, there is a unique $\check{\beta}\in\Aut{\left(\mathbf{M}\right)}$
such that $\beta\leqslant\check{\beta}$, and $\check{\beta}\left[\left\langle B_{0}\right\rangle _{\mathfrak{F}}\right]=\left\langle \beta\left[B_{0}\right]\right\rangle _{\mathfrak{F}}$
for all $B_{0}\subseteq_{fin}B$; 
\item $\mathcal{I}\left(\mathbf{M}\right)=B$. 
\end{enumerate}
First suppose such an enrichment $\mathfrak{L}^{*}$ of $\mathfrak{L}$
exits. Let $\mathbf{M}^{*}$ be the structure $\mathbf{M}$ in the
expanded language $\mathfrak{L}^{*}$. It is clear that $\Aut{\left(\mathbf{M}^{*}\right)}$
is a closed subgroup of $\Aut{(\mathbf{M})}$. Assume $H$ is a subgroup
of small index in $\Aut{(\mathbf{M})}$. Then $H\cap\Aut{\left(\mathbf{M}^{*}\right)}$
has small index in $\Aut{\left(\mathbf{M}^{*}\right)}$. Note that
by Condition (2) the family $\mathfrak{F}$ is compatible with the
permutations of $B$, and the unary predicate $\mathcal{I}$ guarantees
that every automorphism of $\mathbf{M}^{*}$ preserves $B$ set-wise.
Therefore, $\Aut{\left(\mathbf{M}^{*}\right)}$ and the group of permutations
of $B$ are isomorphic (the restriction map from $\Aut{\left(\mathbf{M}^{*}\right)}$
to $\Aut{\left(B\right)}$ is an isomorphism). By the result of Dixon,
Neumann and Thomas in \cite{MR859950} the group of permutations of
$B$ which is isomorphic to $S_{\omega}$, has the small index property.
Hence, there is $B_{0}\subseteq_{fin}B$ such that $\mbox{Aut}_{B_{0}}{\left(\mathbf{M}^{*}\right)}\leq H\cap\Aut{\left(\mathbf{M}^{*}\right)}$.
Now we want to show that $\mbox{Aut}_{\check{B_{0}}}{\left(\mathbf{M}\right)}\leq H$
where $\check{B}_{0}=\gcl{B_{0}}$. 

Similar to \cite{ZEvansTent}, let $\mathcal{X}=\{\gcl A:A\subseteq_{fin}\mathbf{M}\}$
and $\mathcal{F}$ consist of all maps $f:X\to Y$ with $X,Y\in\mathcal{X}$
which extend to automorphisms. By Lemma 4.3. and
Corollary 4.8 in \cite{ZEvansTent}, the independence notion that
is derived from $\gcl -$, is a \emph{stationary independence} (cf.
Definition 2.2 in \cite{ZEvansTent}) that is compatible with the
class $\mathcal{X}$. Suppose $\mathcal{S}\subseteq\mathcal{F}$ and
let $\mathbf{G}\left(\mathcal{S}\right)=\left\{ g\in\mathbf{G}:g\upharpoonright X\in\mathcal{S}\mbox{ for all }X\in\mathcal{X}\right\} $.
By Lemma 2.3 in \cite{ZEvansTent} if $\mathcal{S}_{0}\subseteq\mathcal{F}$
is countable subset, then there exists a countable $\mathcal{S}$
with $\mathcal{S}_{0}\subseteq\mathcal{S}$ such that $\mathbf{G}\left(\mathcal{S}\right)$
is a Polish group: when we topologise $\mathbf{G}$ by taking the
basic open sets those of the form $\mathcal{O}\left(f\right)=\left\{ g\in\mathbf{G}:f\leqslant g\right\} $
where $f\in\mathfrak{\mathcal{F}}$.

Suppose now $h\in\mbox{Aut}_{\check{B}_{0}}{\left(\mathbf{M}\right)}$.
We want to show that $h\in H$. Let $\mathcal{X}_{B_{0}}:=\left\{ X\in\mathcal{X}:B_{0}\subseteq X\right\} $
and $\mathcal{P}\subseteq\mathcal{F}$ be a countable set that contains
the identity maps, is closed under inverses, restrictions and compositions,
and: 
\begin{enumerate}
\item If $\epsilon\in\mathcal{P}$, then $\mbox{id}_{\check{B}_{0}}\leqslant\epsilon$; 
\item $h\upharpoonright X\in\mathcal{P}$ for all $X\in\mathcal{X}_{B_{0}}$; 
\item For all finite subset $B_{1}\subseteq B$ that contains $B_{0}$,
and $u$ a partial isomorphism of $B_{1}$ into a subset of $B$ which
is identity on $B_{0}$, there is a unique $\mathfrak{L}^{*}$-extension
of $u$ to $\gcl{B_{1}}$ in $\mathbf{M}^{*}$ which belongs to $\mathcal{P}$. 
\item If $\epsilon,\nu\in\mathcal{P}$ such that $\epsilon,\nu\leqslant\sigma$
for some $\sigma\in\Aut\left(\mathbf{M}\right)$, then there is $\hat{\sigma}\in\mathcal{P}$
such that $\epsilon,\nu\leqslant\hat{\sigma}$; 
\item If $\epsilon\in\mathcal{P}$, $\epsilon:X\rightarrow Y$ and $Z\in\mathcal{X}_{B_{0}}$,
$X\cup Y\subseteq Z$, then there exists $\lambda\in\mathcal{P}$
such that $\epsilon\leqslant\lambda$ and $\lambda:Z\rightarrow Z$. 
\end{enumerate}
Let $\hat{\mathbf{G}}:=\mathbf{G}\left(\mathcal{P}\right)$ and $K:=\Aut_{B_{0}}\left(\mathbf{M}^{*}\right)$.
It is clear that $h\in\hat{\mathbf{G}}$. From (3) follows that $K\subseteq\hat{\mathbf{G}}$,
and we know $K\subseteq H$. 

See Lascar's proof of Propositions 7 and 8 in \cite{Lascar} for the
following claim:
\begin{claim*}
The followings hold: 
\begin{enumerate}
\item The set of all $\mathcal{P}$-generic automorphisms (see Definition
\ref{def:base}) of $\hat{\mathbf{G}}$ is $G_{\delta}$, and comeager
in $\hat{\mathbf{G}}$; 
\item Suppose $g$ and $g'$ are two $\mathcal{P}$-generic automorphisms,
then there exists $\alpha\in K$ such that $g=\alpha\circ g\circ\alpha^{-1}$. 
\end{enumerate}
\end{claim*}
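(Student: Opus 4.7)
The plan is to transport Lascar's proofs of Propositions~7 and~8 of \cite{Lascar} to the present framework, exploiting the stationary independence on $\mathcal{X}$ (Corollary~4.8 of \cite{ZEvansTent}) and the closure properties (1)--(5) of $\mathcal{P}$. In the spirit of Definition~\ref{def:base}, a $\mathcal{P}$-generic automorphism $g\in\hat{\mathbf{G}}$ will be characterised as one meeting every dense-open set of the form
\[
\mathcal{D}(\epsilon,\nu) \;=\; \bigl\{\, g\in\hat{\mathbf{G}} : \nu\leqslant g \ \Rightarrow\ \exists\,\lambda\in\mathcal{P}\ (\epsilon,\nu\leqslant\lambda\leqslant g) \,\bigr\},
\]
indexed by pairs $(\epsilon,\nu)\in\mathcal{P}\times\mathcal{P}$ admitting a common extension in $\mathbf{G}$. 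Countability of $\mathcal{P}$ is what allows Baire category to be applied in the Polish group $\hat{\mathbf{G}}$.

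For part~(1), I would verify that each $\mathcal{D}(\epsilon,\nu)$ is dense open. Openness is immediate since the topology is generated by the basic opens $\mathcal{O}(\mu)$, $\mu\in\mathcal{P}$. Density combines (4), which amalgamates $\epsilon$ and $\nu$ inside $\mathcal{P}$ whenever they have a common extension in $\mathbf{G}$, with (5), which inflates the amalgam to some $\lambda:Z\to Z$ for $Z\in\mathcal{X}_{B_{0}}$; Theorem~\ref{thm-generic} then realises $\lambda$ as the restriction of an automorphism in $\hat{\mathbf{G}}$. Intersecting countably many dense opens yields the comeager $G_{\delta}$ set of $\mathcal{P}$-generics.

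For part~(2), I would run a back-and-forth to build $\alpha=\bigcup_{n}\alpha_{n}\in K$ with the invariant
\[
\alpha_{n}\circ(g\upharpoonright\mathrm{dom}\,\alpha_{n}) \;=\; (g'\upharpoonright\mathrm{range}\,\alpha_{n})\circ\alpha_{n},\qquad \mathrm{id}_{\check{B_{0}}}\leqslant\alpha_{n}\in\mathcal{P}.
\]
To extend $\alpha_{n}$ on the forth side by a new point $m\in\mathbf{M}$, set $\epsilon_{n}:=\alpha_{n}\circ(g\upharpoonright X)\circ\alpha_{n}^{-1}\in\mathcal{P}$ for $X\in\mathcal{X}_{B_{0}}$ containing $\alpha_{n}(m)$; the $\mathcal{P}$-genericity of $g'$ applied to the family $\mathcal{D}(\epsilon_{n},\nu)$ from part~(1) produces $\lambda\in\mathcal{P}$ with $\epsilon_{n}\leqslant\lambda\leqslant g'$, and (4) then supplies the extended $\alpha_{n+1}$. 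The back step is symmetric, using the genericity of $g$.

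The main obstacle is coordinating each extension step so that $\alpha_{n}$ simultaneously stays inside $\mathcal{P}$ (and hence is an $\mathfrak{L}^{*}$-partial isomorphism fixing $B_{0}$, respecting $\mathfrak{F}$ and preserving $\mathcal{I}$), absorbs a prescribed new element in domain or range, and preserves the conjugation relation $\alpha_{n}g=g'\alpha_{n}$ on a growing domain. This is exactly what conditions (3)--(5) of $\mathcal{P}$ are engineered to accomplish: clause~(3) lets one reduce the extension problem to a finite permutation question on $B$ and lift it canonically to $\gcl{-}$ via the $\mathfrak{F}$-Skolemisation, while clauses~(4)--(5) furnish the amalgamation and common-codomain extension needed at each stage. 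Combined with the stationarity of the independence on $\mathcal{X}$, this exactly matches Lascar's arrangement and yields both assertions of the claim.
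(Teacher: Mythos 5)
The paper does not prove this claim at all: it simply cites Lascar's Propositions~7 and~8 and leaves the transport of the argument to the reader. So there is no in-paper proof to compare against line by line; the relevant question is whether your sketch faithfully implements Lascar's strategy under the closure conditions (1)--(5) imposed on $\mathcal{P}$, and at the level of outline it does. Part~(1) is the standard Baire-category argument over the countably many pairs from $\mathcal{P}$, using (4) for amalgamation and (5) for the ``square'' extension, and part~(2) is the usual back-and-forth with the conjugacy invariant you state; this is exactly what the citation to Lascar amounts to, not an alternative route.

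That said, the forth step in part~(2) as you have written it does not parse: $m$ is introduced as the \emph{new} element to be absorbed into the domain of $\alpha_{n}$, so $\alpha_{n}(m)$ is undefined, and the composite $\alpha_{n}\circ(g\upharpoonright X)\circ\alpha_{n}^{-1}$ is only a map with domain inside $\mathrm{range}\,\alpha_{n}$. What one actually wants is to take $X\in\mathcal{X}_{B_{0}}$ containing $\mathrm{dom}\,\alpha_{n}\cup\{m\}$, form the partial conjugate of $g\upharpoonright X$ along $\alpha_{n}$ (defined only on $\alpha_{n}[\mathrm{dom}\,\alpha_{n}]$), and then invoke the $\mathcal{P}$-genericity of $g'$ together with (4) and (5) to realise an extension of this pattern in $g'$ and to pull $\alpha_{n+1}$ out of the realisation. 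Relatedly, in part~(1) the sets $\mathcal{D}(\epsilon,\nu)$ should be indexed not over all pairs with a common extension in $\mathbf{G}$ but over pairs with $\nu\leqslant\epsilon$ (or a comparable coherence requirement); otherwise, for a basic open $\mathcal{O}(\mu)$ with $\mu\geqslant\nu$ but $\mu$ incompatible with $\epsilon$, every $g\in\mathcal{O}(\mu)$ satisfies the antecedent $\nu\leqslant g$ but none can satisfy the consequent, and density fails. Neither issue changes the overall strategy, but both would need to be repaired before this constitutes a complete adaptation of Lascar's argument.
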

Now, we want to show that $H$ contains $\hat{\mathbf{G}}$. Note
that $H\cap\hat{\mathbf{G}}$ has small index in $\hat{\mathbf{G}}$.
The group $\hat{\mathbf{G}}$ is a Polish group. Hence $H\cap\hat{\mathbf{G}}$
is not meager; meager subgroups has large index in $\hat{\mathbf{G}}$.
Then by the above Lemma, $H\cap\hat{\mathbf{G}}$ contains an $\mathcal{P}$-generic
element. Since $K\subseteq H,\hat{\mathbf{G}}$ again by the above
Lemma the group $H\cap\hat{\mathbf{G}}$ contains the set of all $\mathcal{P}$-generic
automorphisms. Therefore, $H\cap\hat{\mathbf{G}}$ is a comeager subgroup
of $\hat{\mathbf{G}}$. Hence $H\cap\hat{\mathbf{G}}=\hat{\mathbf{G}}$
and then $h\in H$.

Now, we show the enrichment that was claimed exists. Suppose $E\subseteq\mathbf{M}$
and define the following operation 
\[
\mathcal{H}\left(E\right):=\bigcup\left\{ A\subseteq\mathbf{M}:A\mbox{ is 0-algebraic over a finite subset of }E\right\} .
\]
Let $\mathbf{M}_{0}:=\mathcal{H}\left(B\right)$, $\mathbf{M}_{i}:=\mathcal{H}\left(\mathbf{M}_{i-1}\right)$
for $i>0$ and finally $\mathbf{M}_{\omega}:=\bigcup_{i\in\omega}\mathbf{M}_{i}$.
Note that $\mathbf{M}_{\omega}=\mathbf{M}$. We define a family $\mathfrak{F}^{i}$
of functions for each $i\in\omega$. Let $\mathbf{M}_{-1}:=B$ and
fix $\mathfrak{s}\in\omega$. Assume $A\in\mathbf{M}_{\mathfrak{s}}$
and assume $C$ is the finite subset of $\mathbf{M}_{\mathfrak{s}-1}$
that $A$ is $0$-minimally algebraic over $C$. Let $\left\{ A_{i}:i\in\omega\right\} $
be an enumeration of all the isomorphic copies of $A$ that are 0-minimally
algebraic over $C$; without repetition. Fix $\bar{c}$ an enumeration
of $C$ and assume $\left|\bar{c}\right|=\mathfrak{n}$. For each
$i\in\omega$, let $\bar{a}_{i}$ to be enumeration of $A_{i}$ such
that $\mbox{tp}\left(\bar{a}_{i}/\bar{c}\right)\equiv\mbox{tp}\left(\bar{a}_{j}/\bar{c}\right)$
when $i,j\in\omega$. For $i\in\omega$ let $f_{i,A}^{\mathfrak{s}}$
be a map that $f_{i,A}^{\mathfrak{s}}\left(\bar{c}\right)=\bar{a}_{i}$.
Extend the domain of $f_{i,A}^{\mathfrak{s}}$ to $\mathbf{M}^{\mathfrak{n}}$
as follows: $f_{i,A}^{\mathfrak{s}}\left(\bar{x}\right)=\left(x_{0},\cdots,x_{0}\right)$
if $\bar{x}\in\mathbf{M}^{\mathfrak{n}}$ and $\bar{x}\neq\bar{c}$.
Let $\mathfrak{F}_{A}^{\mathfrak{s}}:=\left\{ f_{i,A}^{\mathfrak{s}}:i\in\omega\right\} $.
We assume for elements $A_{1},A_{2}$ in $\mathbf{M}_{\mathfrak{s}}$
if $A_{1},A_{2}$ are $0$-minimally algebraic over a the same finite
set $C\subseteq\mathbf{M}_{\mathfrak{s-1}}$ and $A_{1}\cong_{C}A_{2}$,
then $\mathfrak{F}_{A}^{\mathfrak{s}}=\mathfrak{F}_{A'}^{\mathfrak{s}}$.
Now define $\mathfrak{F}^{\mathfrak{s}}:=\bigcup_{A\in\mathbf{M}_{\mathfrak{s}}}\mathfrak{F}_{A}^{\mathfrak{s}}$. 

Finally, let $\mathfrak{F}:=\bigcup_{i\in\omega}\mathfrak{F}^{i}$.
Notice that elements of $\mathfrak{F}$ are not necessarily functions
from some power of $\mathbf{M}$ to $\mathbf{M}$ as to be considered
in $\mathfrak{\mathfrak{L}}$. But this can be fixed by considering
all the projections of them to each single arity. It is clear that
$\left\langle B_{0}\right\rangle _{\mathfrak{F}}=\gcl{B_{0}}$ for
all $B_{0}\subseteq B$. One can extend any permutation of $B$ step
by step to $\mathbf{M}_{i}$ in a unique way similar to the proof
of Lemma \ref{lem-sym} for each $i\in\omega$. 
\end{proof}

\section{Proof of Lemma \ref{pro:main}}

\label{SECLEM}

First we prove the following lemma whose proof is very similar to
the proof of Lemma 3.2.19 in \cite{Zthesis}. 
\begin{lem}
\label{lem:extend} Let $X=\gcl A$ where $A\subseteq_{fin}\mathbf{M}$
and suppose $g\in\Aut(X)$. Then, there is $\gamma\in\Aut\left(\mathbf{M}\right)$
that extends $g$. \end{lem}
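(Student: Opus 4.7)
The plan is to extend $g$ by a back-and-forth argument, using the fact that $X$ is an increasing union of finite $\leqslant$-closed subsets of $\mathbf{M}$ and that $g$ is forced on $X$. Concretely, first I would write $X$ as an increasing union of finite $\leqslant$-closed subsets $A_0\leqslant A_1\leqslant\cdots$ of $\mathbf{M}$ with $A_0=\cl{A}$ and $\bigcup_n A_n = X$; since every element of $X$ lies in $\gcl{A}$, each $A_n$ satisfies $\delta(A_n)=\dim A$. As a preliminary fact I would check that for all $n$ the image $g[A_n]$ is again $\leqslant$-closed in $\mathbf{M}$: if $g[A_n]\subseteq C\subseteq_{fin}\mathbf{M}$, decompose $C = (C\cap X)\cup(C\setminus X)$; on the $X$-side use that $g^{-1}$ is an isomorphism of $X$ together with $A_n\leqslant\mathbf{M}$, and on the outside use that elements of $\mathbf{M}\setminus X$ have strictly positive $\gcl$-dimension over every finite subset of $X$.

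Next I would enumerate $\mathbf{M}=\{m_0,m_1,\ldots\}$ and inductively construct a chain of partial isomorphisms $\gamma_0\leqslant \gamma_1\leqslant\cdots$ between finite $\leqslant$-closed substructures of $\mathbf{M}$, maintaining the invariant that $A_n\subseteq\mathrm{dom}(\gamma_n)$, $g[A_n]\subseteq\mathrm{range}(\gamma_n)$, and $\gamma_n\upharpoonright(\mathrm{dom}(\gamma_n)\cap X)$ agrees with $g$. At the forth step, if $m_n\in X$ one is forced to map it to $g(m_n)$, enlarge the domain to $\cl{\mathrm{dom}(\gamma_n)\cup\{m_n\}}$ and the range correspondingly; if $m_n\notin X$, extend by amalgamating a $\leqslant$-closed set containing $m_n$ over $\mathrm{dom}(\gamma_n)$ using Theorem~\ref{free-amalgam} and realizing the resulting strong type in $\mathbf{M}$ via Theorem~\ref{thm-generic}. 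The back step is symmetric. The union $\gamma=\bigcup_n\gamma_n$ is then the desired automorphism.

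The main obstacle will be the forced forth step for $m\in X$: one must verify that adding $\cl{\mathrm{dom}(\gamma_n)\cup\{m\}}$ to the domain, together with its $g$-image, yields a partial isomorphism between $\leqslant$-closed finite substructures of $\mathbf{M}$. The crucial point is that for $Y\subseteq_{fin}X$ one has $\cl{Y}\subseteq\gcl{Y}\subseteq\gcl{X}=X$, so the new closure splits as a piece inside $X$ (already controlled by $g$) and the part of $\mathrm{dom}(\gamma_n)$ lying outside $X$; checking that the amalgamated structure remains $\leqslant$-closed and edge-compatible reduces to applying the preliminary fact about $g[A_n]$ together with the dimension identities in Lemma~\ref{lem:basic-generic-delta}. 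Once this is in place the construction goes through uniformly and $\gamma$ is well-defined and extends $g$.
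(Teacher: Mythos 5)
Your proposal takes essentially the same back-and-forth route as the paper: build a chain of partial isomorphisms $\gamma_0 \leqslant \gamma_1 \leqslant \cdots$ between finite $\leqslant$-closed subsets of $\mathbf{M}$ that agree with $g$ on $X$, forcing the map on the $X$-part and using $\leqslant$-genericity (Theorem~\ref{thm-generic}) to realize the rest; the paper's proof does exactly this, covering a fixed chain $\langle B_i\rangle$ with $\bigcup B_i = \mathbf{M}$ in place of your element-by-element enumeration, and your "preliminary fact" is precisely what the paper uses implicitly when it asserts that $g_i\left[D_i\right]\cup g\left[\check{B}_{k+1}\cap X\right]$ is "already a partial isomorphism of $\leqslant$-closed sets."

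One small remark: your sketch of the preliminary fact is a little indirect; the cleanest route is to note that for finite $B\subseteq X$ one has $B\leqslant X$ iff $B\leqslant\mathbf{M}$ (since $\cl{B}\subseteq\gcl{B}\subseteq X$, so $\delta\left(\cl{B}\right)=\delta(B)$ when $B\leqslant X$, and then Lemma~\ref{lem:basic-generic-delta}(2) applied to $\cl{B}\leqslant\mathbf{M}$ gives $\delta(C)\geq\delta\left(\cl B\cap C\right)\geq\delta(B)$ for any finite $C\supseteq B$); from this, $g\left[A_n\right]\leqslant\mathbf{M}$ is immediate because $g\in\Aut(X)$ takes $A_n\leqslant X$ to $g\left[A_n\right]\leqslant X$. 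Also, in the $m\notin X$ case it is not quite free amalgamation that is used: the relations of $m$ to $\mathrm{dom}(\gamma_n)$ are already fixed by $\mathbf{M}$, so one computes $\cl{\mathrm{dom}(\gamma_n)\cup\{m\}}$ and then uses $\leqslant$-genericity to embed an isomorphic copy of the new part over the current range (together with the $g$-images of the new $X$-elements) strongly into $\mathbf{M}$, as in the paper.
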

\begin{proof}
Without loss of generality we can assume $A$ is $\leqslant$-closed.
Fix $\left<B_{i}:i<\omega\right>$ a chain of finite $\leqslant$-closed
subsets of $\mathbf{M}$, such that $B_{0}:=A$ and $\mathbf{M}=\bigcup_{i<\omega}B_{i}$.
Similarly, fix $\left<C_{i}:i<\omega\right>$ a chain of finite $\leqslant$-closed
subsets of $\mathbf{M}$, such that $C_{0}:=g\left[A_{0}\right]$
and $\mathbf{M}=\bigcup_{i<\omega}C_{i}$. Let $g_{0}:=g\upharpoonright B_{0}$.
Using a back and forth construction in the following, we build finite
partial isomorphisms $g_{0}\leqslant g_{1}\leqslant\cdots$ between
$\leqslant$-closed subsets of $\mathbf{M}$ and, then $\gamma:=\bigcup_{i<\omega}g_{i}$
will be the desired automorphism of $\mathbf{M}$ that extends $g$.
When $i=2k$ we make sure that $C_{k}$ is in the rang of $g_{i}$
and agrees with $g\upharpoonright\left(C_{k}\cap X\right)$, and when
$i=2k+1$ we make sure $B_{k+1}$ is in the domain of $g_{i}$ and
agrees with $g\upharpoonright\left(B_{k+1}\cap X\right)$. As $\bigcup_{i<\omega}B_{i}=\bigcup_{i<\omega}C_{i}=\mathbf{M}$,
then $\gamma$ will be an automorphism of $\mathbf{M}$. 

Assume $g_{i}$ is defined for $i=2k$ and we want to construct $g_{i+1}$.
Let $D_{i}=\mbox{dom}\left(g_{i}\right)$. If $B_{k+1}\subseteq D_{i}$
then let $g_{i+1}=g_{i}$. Suppose now $B_{k+1}\backslash D_{i}\neq\emptyset$
and let $\check{B}{}_{k+1}:=\cl{B_{k+1}D_{i}}$ and $\hat{B}_{k}:=\check{B}{}_{k+1}\backslash\left(D_{i}\cup X\right)$.
It is clear that $D_{i}\cup\left(X\cap\check{B}_{k+1}\right)=\left(D_{i}\cup X\right)\cap\check{B}{}_{k+1}\leqslant\check{B}{}_{k+1}$.
By the $\leqslant$-genericity of $\mathbf{M}$, we can find $E$,
an isomorphic copy of $\hat{B}_{k}$, over $g_{i}\left[D_{i}\right]\cup g\left[\check{B}_{k+1}\cap X\right]$
such that $g_{i}\left[D_{i}\right]\cup g\left[\check{B}_{k+1}\cap X\right]\cup E$
is $\leqslant$-closed in $\mathbf{M}$. Now let $g_{i+1}$  be the
partial isomorphism that extends $g_{i}\left[D_{i}\right]\cup g\left[\check{B}_{k+1}\cap X\right]$
and sends $\hat{B}_{k}$ to $E$ (note that $g_{i}\left[D_{i}\right]\cup g\left[\check{B}_{k+1}\cap X\right]$
is already a partial isomorphism of $\leqslant$-closed sets). Similarly
we can extend $g_{i}$ for $i=2k+1$ such that $C_{k}\subseteq\mbox{rang}\left(g_{i}\right)$. 
\end{proof}

\begin{proof}[Proof of Lemma \ref{pro:main}]
(1) Let $A':=\cl A$. It is clear that $\mathbf{G}_{A'}\leq\mathbf{G}_{\{X\}}$
and therefore $\mathbf{G}_{\{X\}}$ is open.\\
(2) Follows immediately from (1).\\
(3) (Special case of Theorem 5.1.5 in \cite{Zthesis}) Let $H\leq\mathbf{G}$
with $[\mathbf{G}:H]\leq\aleph_{0}$. Then $H':=H\cap\mathbf{G}_{\{X\}}$
has small index in $\mathbf{G}_{\{X\}}$. If $\mathbf{G}_{\{X\}}$
has SIP, then $H'$ is open in $\mathbf{G}_{\{X\}}$. Therefore from
(1) follows that $H'$ is open in $\mathbf{G}$, thus $H$ is open
in $\mathbf{G}$.\\
 (4) It is clear that $\pi_{X}$ is a group homomorphism. Surjectivity
follows from Lemma \ref{lem:extend}. Let $K:=\Aut_{X_{0}}(X)$ be
a basic open neighbourhood of the identity in $\Aut(X)$ where $X_{0}\subseteq_{fin}X$.
Then $\pi_{X}^{-1}(K)=\mathbf{G}_{X_{0}}\cap\mathbf{G}_{\{X\}}$ which
is a basic open neighbourhood of the indentity in $\mathbf{G}_{\{X\}}$.
Also it is clear that $\pi_{X}(\mathbf{G}_{X_{0}}\cap\mathbf{G}_{\{X\}})=\Aut_{X_{0}}(X)$.\\
 
\end{proof}

\section{The small index property of $\protect\Aut\left(\hat{\mathbf{M}}\right)$}

\label{SEC:4} In this section we prove Theorem $\ref{thm:main}$
and Theorem $\ref{thm:main-1}$. 

Fix the following notation: Suppose $A,B,C$ are $\mathfrak{L}$-structures
and $A,B\subseteq C$ such that $A\cap B=\emptyset$, $A\leqslant C$
and $AB\leqslant C$. Write $\PK CA$ for the set of all $\leqslant$-embeddings
of $A$ in $C$. Let $\alpha\in\PK CA$, and write $\mu_{C}\left(B,\alpha\right)$
for the set $\left\{ \alpha'\in\PK C{AB}:\alpha'\upharpoonright A=\alpha\right\} $. 

Recall the following definitions from \cite{MR1231710}: 
\begin{defn}
\label{def:base} Suppose $M$ is a countable first order structure,
and let $G:=\Aut\left(M\right)$.
\begin{enumerate}
\item A countable class of substructures $\mathscr{B}$ of $M$ is called
a \emph{base} for $M$ if:

\begin{enumerate}
\item $G{}_{A}$ is open in $G$ for all $A\in\mathsf{\mathscr{B}}$; 
\item If $A\in\mathscr{B}$ and $g\in G$, then $g\left[A\right]\in\mathscr{B}$.
\end{enumerate}
\item Let $\mathscr{B}$ be a base for $M$ and $n\in\omega$ a nonzero
integer. Let $\gamma=\left(g_{1},\cdots,g_{n}\right)$ be a sequence
of elements of $G$. We say $\gamma$ is $\mathscr{B}$-\emph{generic
}if the following two conditions hold: 

\begin{enumerate}
\item If $A\in\mathscr{B}$, then $\left\{ G{}_{B}:A\subseteq B\in\mathsf{\mathscr{B}},g_{i}\left[B\right]=B\mbox{ for }i\leq n\right\} $
is a base of open neighbourhoods of $1$ in $G$. 
\item Whenever $A\in\mathscr{B}$ is such that $\gamma\upharpoonright A$
is a sequence of automorphisms of A and $A_{1}\in\mathsf{\mathscr{B}}$
is such that $A\subseteq A_{1}$ and $\theta=\left(t_{i}:i\leq n\right)$
is a sequence of automorphisms of $A_{1}$ extending $\gamma\upharpoonright A$
i.e. $g_{i}\upharpoonright A\leqslant t_{i}$ for all $i\leq n$,
then there exists $\alpha\in G_{A}$ such that $\gamma$ extends $\alpha\circ\theta\circ\alpha^{-1}$
(or $\gamma^{\alpha}:=\left(g{}_{i}^{\alpha}:i\leq n\right)$ extends
$\theta$).
\end{enumerate}
\item Suppose $\mathscr{B}$ is a base for $M$. Then the structure $M$
has \emph{ample} $\mathscr{B}$-\emph{generic automorphisms} if for
all non-zero $n\in\omega$, the set of $\mathscr{B}$-generic elements
of $G{}^{n}$ is comeager in $G{}^{n}$, in the product topology. 
\item We say $M$ has \emph{ample homogeneous generic automorphisms} if
there exists a base $\mathscr{B}$ for $M$ such that $M$ has $\mathscr{B}$-generic
automorphism.
\item Suppose $\mathscr{B}$ is a base for $M$. We say $\mathscr{B}$ is
an \emph{amalgamation base }if

\begin{enumerate}
\item $\mathscr{B}$ is countable. 
\item If $e_{1},\cdots,e_{n}$ are finite elementary maps from $M$ to $M$
and $A\in\mathscr{B}$. Then there is $B\in\mathscr{B}$ containing
$A$ and $f_{i}\in\Aut(B)$ such that $e_{i}\leqslant f_{i}$ for
$0\leq i\leq n$. 
\item Let $A,B,C\in\mathsf{\mathscr{B}}$ with $A\subseteq B,C$. Then there
is $\alpha\in G{}_{A}$ such that whenever $g\in\Aut(\alpha[B])$,
$h\in\Aut(C)$ satisfy $g\upharpoonright A=h\upharpoonright A\in\Aut(A)$,
then $g\cup h$ is an elementary map that can be extended to an automorphism
of $M$. 
\end{enumerate}
\end{enumerate}
\end{defn}
In order to show $\Aut\left(\hat{\mathbf{M}}\right)$ has SIP, we
prove $\hat{\mathbf{M}}$ has ample homogeneous generic automorphisms
and for that we need to show the existence of an amalgamation base
for $\hat{\mathbf{M}}$. We now introduce the following key combinatorial
definition of the extension property:
\begin{defn}
\label{def-ext} Suppose $\mathsf{E}$ is a subclass of $\mathsf{K_{0}}$.
We say $\mathsf{E}$ has the \emph{extension property} (EP) if for
every $A\in\mathsf{E}$ and every finite set $e_{0},\cdots,e_{n}$
of elementary maps of $\leqslant$-closed subsets of $A$, which are
extendable to automorphisms of $\mathbf{M}$, there exist $B\in\mathsf{E}$
and $f_{i}\in\Aut(B)$ such that $A\subseteq B$ and $e_{i}\leqslant f_{i}$
for $0\leq i\leq n$. 
\end{defn}
To prove a certain class of substructures is an amalgamation base,
its extension property appears as a technical part.
\begin{rem}
\label{rem-EP} As mentioned before, in \cite{Zthesis} it has been
shown that $\left(\mathsf{K_{0},\leqslant}\right)$ does not have
the extension property; EP does not hold for some elements of $\mathsf{K_{0}}$
with even with only one partial $\leqslant$-closed map. Similarly
one can to show that $\left(\mathsf{C}_{A},\leqslant\right)$ does
not have EP where $A\subseteq_{fin}\mathbf{M}$ with $\dim A>0$ and
$\mathsf{C}_{A}:=\left\{ B\in\mathsf{K_{0}}:A\leqslant B\right\} $.
It is interesting to comment that for the classes that are obtained
from pre-dimensions with irrational coefficients (or simple $\omega$-categorical
generic structures with rational coefficients see \cite{MR1900903,ZEvansTent})
one can still show EP does not hold with a slightly different argument,
however one needs to consider at least two partial $\leqslant$-closed
maps. More recently, in \cite{GKP-Amen} a connection between having
a \emph{tree-pair} and EP has been observed. Moreover, David M. Evans
in an email correspondence has also noted that using a different proof,
he can show EP does not hold for either of the classes that are obtained
from pre-dimensions with rational and irrational coefficients. 
\end{rem}
The main technical lemma in this section is the following (the proof
is given later).
\begin{lem}
\label{lem:ep-im} Let $\mathsf{C}:=\left\{ A\in\mathsf{K_{0}}:\delta\left(A\right)=0\right\} $.
Then the class $\left(\mathsf{C},\leqslant\right)$ has the extension
property.
\end{lem}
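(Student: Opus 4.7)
The plan is to combine Herwig's extension theorem for finite graphs with a $G$-equivariant completion, exploiting the free-amalgamation closure of $\mathsf{C}$.

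As a preliminary observation, note that for $A, B \in \mathsf{C}$ with $A \subseteq B$ one automatically has $A \leqslant B$: since $B \in \mathsf{K_{0}}$, every subset $A \subseteq X \subseteq B$ satisfies $\delta(X) \geq 0 = \delta(A)$. Combined with Theorem~\ref{free-amalgam}, this yields that $\mathsf{C}$ is closed under free amalgamation: if $A, B, C \in \mathsf{C}$ and $A \subseteq B, C$, then $B \otimes_{A} C \in \mathsf{K_{0}}$ and $\delta(B \otimes_{A} C) = \delta(B) + \delta(C) - \delta(A) = 0$, so $B \otimes_{A} C \in \mathsf{C}$. In particular $(\mathsf{C}, \subseteq)$ is a Fra\"{\i}ss\'e class with $\hat{\mathbf{M}}$ as its limit, and the free-amalgamation closure is the key tool for the symmetric constructions below.

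With this in hand, the proof proceeds in three stages. First, I apply Herwig's extension theorem for finite graphs \cite{MR1357282, MR1658539} to the partial isomorphisms $e_{0}, \ldots, e_{n}$ of the finite graph $A$, obtaining a finite graph $B^{\ast} \supseteq A$ and $\tilde{f}_{i} \in \Aut(B^{\ast})$ with $e_{i} \leqslant \tilde{f}_{i}$. The subgroup $G := \langle \tilde{f}_{0}, \ldots, \tilde{f}_{n} \rangle \leq \Aut(B^{\ast})$ is then finite. Second, since $B^{\ast}$ need not lie in $\mathsf{K_{0}}$, I rebuild the edge pattern on $B^{\ast} \setminus A$ in a $G$-equivariant fashion, replacing edges between $G$-orbits of vertices outside $A$ so that every subset has non-negative $\delta$. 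Crucially, $A$ itself is $G$-invariant and has $\delta(A) = 0$, which provides the slack needed to accommodate a $G$-equivariant edge arrangement on $B^{\ast} \setminus A$ that satisfies the $\mathsf{K_{0}}$-constraint. The outcome is a graph $B^{\ast\ast} \in \mathsf{K_{0}}$ containing $A$ and carrying a $G$-action extending that on $A$. Third, I attach a final layer of $G$-equivariant $0$-algebraic vertices over $B^{\ast\ast}$, tuned so that the total $\delta$ drops to $0$; by the free-amalgamation closure of $\mathsf{C}$ noted above, the resulting graph $B$ lies in $\mathsf{C}$, and the $G$-action extends to automorphisms $f_{i} \in \Aut(B)$ with $e_{i} \leqslant f_{i}$.

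The main obstacle, I expect, is the second stage. The edge pattern outside $A$ cannot be altered arbitrarily without destroying the $G$-action extending the $e_{i}$, so one must show that a $G$-equivariant edge arrangement on $B^{\ast} \setminus A$ satisfying $\mathsf{K_{0}}$ exists in the first place. This requires a delicate combinatorial argument using Lemma~\ref{lem:basic-generic-delta} (submodularity of $\delta$) together with $\delta(A) = 0$. As Remark~\ref{rem-EP} emphasizes, EP fails for the larger class $\mathsf{K_{0}}$, so the hypothesis $\delta(A) = 0$ is truly \emph{indispensable} here: it is precisely what turns the $G$-equivariant edge bookkeeping from an obstructed problem (in $\mathsf{K_{0}}$) into a solvable one (in $\mathsf{C}$).
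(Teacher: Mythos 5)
Your preliminary observations are correct: in $\mathsf{C}$ every inclusion is automatically a $\leqslant$-embedding, and $\mathsf{C}$ is closed under free amalgamation over substructures. Stage~1 (Herwig) is also fine as stated. But the proposal breaks down at Stage~2, and in a way that is not a missing lemma but a flaw in the plan. First, the assertion that ``$A$ itself is $G$-invariant'' is false in general: the $\tilde f_{i}$ from Herwig's theorem extend the partial maps $e_{i}$, whose domains and ranges are proper $\leqslant$-closed subsets of $A$, and $\tilde f_{i}$ can (and typically will) send points of $A\setminus\mathrm{dom}(e_{i})$ out of $A$. Consequently the $G$-orbit of the edge set of $A$ sweeps out a large, structured family of forced edges across $B^{*}$, and you have \emph{no} freedom to ``rebuild the edge pattern on $B^{*}\setminus A$'' without breaking the $G$-action: every edge $\{a,a'\}\subseteq A$ forces $\{g(a),g(a')\}$ to be an edge for all $g\in G$, and many of these forced edges land partially or wholly outside $A$. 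Since $A$ has the highest edge density allowed ($|\mathcal{R}(A)|=\mathfrak{m}|A|$ because $\delta(A)=0$), these $G$-translates of $A$ can easily overlap in ways that create subsets with $\delta<0$, and no $G$-equivariant ``rewiring'' of the remaining free edges can undo that. The claim that $\delta(A)=0$ ``provides the slack'' is asserted but never given a mechanism, and it points in the wrong direction: $\delta(A)=0$ means $A$ is as edge-dense as $\mathsf{K_{0}}$ permits, which makes the forcing problem harder, not easier. Stage~3 (lowering $\delta$ to $0$ by attaching $0$-algebraic points) also cannot work as written: attaching $0$-algebraic extensions over a set $D$ leaves $\delta(D)$ unchanged rather than decreasing it, so if $\delta(B^{**})>0$ you need relative $\delta<0$, and doing that $G$-equivariantly without creating negative subsets is a separate nontrivial problem you would still need to solve.

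The paper's actual proof takes a completely different route and never invokes Herwig. The role that $\delta(A)=0$ plays there is structural: it allows a canonical decomposition of $A$ into disjoint, pairwise non-adjacent \emph{minimally closed} pieces $\mathcal{E}(A)$ and \emph{maximal connected zero-sets} $\mathcal{F}(A)$ stratified by a finite ``level of complexity.'' The construction then builds $B$ in layers $B_{0}\leqslant B_{1}\leqslant\cdots\leqslant B_{\mathfrak{l}}$: $B_{0}$ is assembled from many copies of the minimally closed pieces (enough copies, counted via the cycle lengths $\mathfrak{o}_{i}$ of the $e_{i}$, to close off every partial orbit), and each subsequent layer freely amalgamates in isomorphic copies of the $0$-minimally algebraic extensions so as to achieve a $j$-uniform algebraicity condition, which is exactly what guarantees that the partial automorphisms can be extended one level at a time. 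The whole point of the layered construction is that the $\delta\geq 0$ constraint and the orbit-closure constraint are managed \emph{simultaneously} from the ground up, rather than discovered in conflict after a generic extension. If you want to salvage a Herwig-style argument you would need a version of Herwig's theorem adapted to the class $(\mathsf{C},\leqslant)$ itself, and Remark~\ref{rem-EP} (EP fails even for the larger class $\mathsf{K_{0}}$) should make you pessimistic that a black-box graph extension plus a ``fix-up'' step can substitute for that.
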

Then, we conclude 
\begin{cor}
\label{cor:amal} The class $\mathscr{C}:=\left\{ A\subseteq\mathbf{\hat{M}}:\delta\left(A\right)=0\right\} $
is an amalgamation base for $\hat{\mathbf{M}}$.\end{cor}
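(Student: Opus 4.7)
The plan is to verify the five requirements for $\mathscr{C}$ to be an amalgamation base for $\hat{\mathbf{M}}$: the two base conditions in Definition~\ref{def:base}(1), countability, and the extension and amalgamation conditions in Definition~\ref{def:base}(5)(b,c). First I would observe that $\mathscr{C}$ is exactly the collection of finite $\leqslant$-closed subsets of $\hat{\mathbf{M}}$: for any finite $A \subseteq \hat{\mathbf{M}}$ we have $\dim A = 0$ since $A \subseteq \gcl{\emptyset}$, so $\delta(A) = 0$ is equivalent to $A \leqslant \hat{\mathbf{M}}$. Countability and the base conditions are then immediate, since elements of $\mathscr{C}$ are finite and automorphisms of $\hat{\mathbf{M}}$ preserve $\delta$.

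Next I would record that $\hat{\mathbf{M}}$ is the $\leqslant$-generic structure for $(\mathsf{C},\leqslant)$. The class $(\mathsf{C},\leqslant)$ has the free-amalgamation property: for $A,B,C \in \mathsf{C}$ with $A \leqslant B,C$, Theorem~\ref{free-amalgam} gives $B \otimes_A C \in \mathsf{K_{0}}$ and one checks $\delta(B \otimes_A C) = \delta(B) + \delta(C) - \delta(A) = 0$. A standard Fra\"{i}ss\'e-style argument then identifies $\hat{\mathbf{M}}$ as the unique countable $(\mathsf{C},\leqslant)$-generic. Consequently $\hat{\mathbf{M}}$ is $\leqslant$-homogeneous: any isomorphism between elements of $\mathscr{C}$ extends to an automorphism of $\hat{\mathbf{M}}$, and every element of $\mathsf{C}$ embeds $\leqslant$-closedly into $\hat{\mathbf{M}}$. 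By Lemma~\ref{lem:extend} applied with $A = \emptyset$, every automorphism of $\hat{\mathbf{M}}$ extends to an automorphism of $\mathbf{M}$, and hence is elementary in $\mathbf{M}$.

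For the extension condition~(5)(b), given $A \in \mathscr{C}$ and finite elementary maps $e_1,\dots,e_n$ of $\hat{\mathbf{M}}$, I would first enlarge $A$ to some $A' \in \mathscr{C}$ whose $\leqslant$-closure in $\hat{\mathbf{M}}$ contains the $\leqslant$-closures of the domains and ranges of all the $e_i$. By $\leqslant$-homogeneity each $e_i$ extends uniquely to an isomorphism $e_i'$ between $\leqslant$-closed subsets of $A'$, and these $e_i'$ are elementary in $\mathbf{M}$ by the previous paragraph. Lemma~\ref{lem:ep-im} applied to $A'$ and the $e_i'$ then produces $B \in \mathsf{C}$ with $A' \subseteq B$ and $f_i \in \Aut(B)$ extending the $e_i'$. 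A $\leqslant$-closed embedding of $B$ over $A'$ into $\hat{\mathbf{M}}$, available by $\leqslant$-genericity, yields the desired element of $\mathscr{C}$ together with the required extending automorphisms.

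For the amalgamation condition~(5)(c), given $A,B,C \in \mathscr{C}$ with $A \subseteq B,C$, I would use free amalgamation and $\leqslant$-genericity to embed $B \otimes_A C$ $\leqslant$-closedly into $\hat{\mathbf{M}}$ over $C$, producing a $\leqslant$-closed copy $B' \in \mathscr{C}$ of $B$ over $A$ freely amalgamated with $C$. Homogeneity then supplies $\alpha \in \Aut_A(\hat{\mathbf{M}})$ with $\alpha[B] = B'$. Whenever $g \in \Aut(\alpha[B])$ and $h \in \Aut(C)$ agree on $A$, the absence of any $\mathcal{R}$-edges between $\alpha[B] \setminus A$ and $C \setminus A$ makes $g \cup h$ an automorphism of the $\leqslant$-closed substructure $\alpha[B] \otimes_A C \subseteq \hat{\mathbf{M}}$, and a final application of $\leqslant$-homogeneity extends $g \cup h$ to an automorphism of $\hat{\mathbf{M}}$. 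The main obstacle is entirely concentrated in Lemma~\ref{lem:ep-im}; once the extension property for $(\mathsf{C},\leqslant)$ is in hand, the corollary is a routine assembly of free-amalgamation and $\leqslant$-homogeneity arguments.
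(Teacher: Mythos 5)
Your proof is correct and follows the same route as the paper's: condition 5(a) and the base conditions are immediate once $\mathscr{C}$ is identified with the finite $\leqslant$-closed subsets of $\hat{\mathbf{M}}$, condition 5(b) reduces to Lemma~\ref{lem:ep-im}, and condition 5(c) is handled by the free-amalgamation property together with $\leqslant$-homogeneity. You supply more of the bridging details (that $\hat{\mathbf{M}}$ is the $(\mathsf{C},\leqslant)$-generic, that automorphisms of $\hat{\mathbf{M}}$ extend to $\mathbf{M}$ via Lemma~\ref{lem:extend} so that the hypothesis of Lemma~\ref{lem:ep-im} is met) than the paper's very terse proof does, but the underlying argument is the same.
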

\begin{proof}
Condition in 5(a) in Definition $\ref{def:base}$ is obvious. Condition
5(b) follows from Lemma \ref{lem:ep-im}. For 5(c) let $B'$ be an
isomorphic copy of $B$ such that $B'\ind_{A}C$ and $B'\cap C=\emptyset$
(this follows from $\mathbf{M}$ being stable or the $\left(\mathsf{K_{0}},\leqslant\right)$-genericity
of $\mathbf{M}$ plus the free-amalgamation property). It is also
clear that there is $\alpha\in\mathbf{G}{}_{A}$ such that $\alpha\left[B\right]=B'$
and the result follows.
\end{proof}
In Theorem 2.9 in \cite{MR1231710} it is shown if $M$ is a countable
$\omega$-categorical structure and $\mathscr{B}$ an amalgamation
base, then $M$ has ample $\mathscr{B}$-generic automorphisms. Moreover,
in Theorem 5.3 in \cite{MR1231710} it is shown if $M$ is a countable
structure with ample homogeneous generics, then $M$ has SIP. Now
using Corollary $\ref{cor:amal}$ we can finish the proof Theorem
$\ref{thm:main}$.
\begin{proof}[Proof of Theorem \ref{thm:main}]

We follow a similar method in \cite{MR1231710}. In Corollary $\ref{cor:amal}$
we proved $\mathscr{C}:=\left\{ A\subseteq\hat{\mathbf{M}:}\delta\left(A\right)=0\right\} $
is an amalgamation base for $\hat{\mathbf{M}}$. Notice that in our
case $\mathbf{\hat{M}}$ is not $\omega$-categorical, however we
can prove, following the proof of Theorem 2.9 in \cite{MR1231710}
using Lemma $\ref{lem:extend}$ instead of Corollary 2.5 in \cite{MR1231710},
that the structure $\hat{\mathbf{M}}$ has ample $\mathscr{C}$-generic
automorphisms. Then, from Theorem 5.3 in \cite{MR1231710} we conclude
$\hat{\mathbf{M}}$ has SIP.
\end{proof}
Before starting the proof of Lemma $\ref{lem:ep-im}$, we need to
consider the following definitions.
\begin{defn}

\begin{enumerate}
\item Suppose $A\in\mathsf{C}$ and $E\subseteq A$. We say $E$ is \emph{minimally
closed} (m.c.) in $A$ if $E\leqslant A$ and $\delta(E')>\delta(E)$
for all $E'\subsetneqq E$ (or equivalently $\cl{E'}=E$ for all $E'\subseteq E$).
Define $\mathcal{E}\left(A\right):=\left\{ E\subseteq A:E\mbox{ is m.c. in }\mbox{A}\right\} $. 
\item Suppose $A\in\mathsf{C}$ and $C\subseteq A$. We say $C$ is a \emph{connected
zero-set} (c.z.) of $A$ if $C\leqslant A$ and $C$ cannot be partitioned
into nonempty disjoint $\leqslant$-closed subsets. We say $C$ is
a \emph{maximal connected zero-set} (m.c.z.) if there is no connected
zero-set $C'\subseteq A$ that contains $C$ and $C'\neq C$. Write
$\mathcal{F}(A)$ for the set $\left\{ C\subseteq A:C\mbox{ is m.c.z}\right\} $.
\item To each $C$ in $\mathcal{F}\left(A\right)$, we assign a number $\mathfrak{l}_{C}$
which is the minimum natural number such that $C=\bigcup_{i\leq\mathfrak{l}_{C}}C_{i}$
where: 

\begin{enumerate}
\item $C_{0}:=\bigcup\mathcal{E}(C)$; 
\item $C_{i+1}:=C_{i}\cup\bigcup\left\{ D\subseteq C:D\mbox{ is }0\mbox{-algebraic over }C_{i}\right\} $
and $C_{i+1}\neq C_{i}$ for $0<i<\mathfrak{l}_{C}$, and $C_{\mathfrak{l}_{C}}=C$. 
\end{enumerate}

We call $\mathfrak{l}_{C}$ the \emph{level of complexity} of $C$.

\end{enumerate}
\end{defn}
\begin{rem}
\label{rem-block} 
\begin{enumerate}
\item Suppose $A\in\mathsf{C}$. Then elements of $\mathcal{E}(A)$ are
disjoint (see Lemma 3.1.5. in \cite{Zthesis}). Moreover, if $E_{1},E_{2}\in\mathcal{E}\left(A\right)$
are distinct, then $\mathcal{R}^{A}\left(E_{1};E_{2}\right)=\emptyset$. 
\item Suppose $C\in\mathcal{F}(A)$. It is easy to see that there is $E\in\mathcal{E}\left(A\right)$
such that $E\subseteq C$, and $\mathcal{E}\left(C\right)\subseteq\mathcal{E}\left(A\right)$.
Similar to (1) elements of $\mathcal{F}\left(A\right)$ are disjoint,
and for any two distinct $C_{i},C_{j}\in\mathcal{F}(A)$ we have $\mathcal{R}^{A}\left(C_{i};C_{j}\right)=\emptyset$. 
\end{enumerate}
\end{rem}
We use the following definitions in the proof EP for $\left(\mathsf{C},\leqslant\right)$.
\begin{defn}
Suppose $A\in\mathsf{C}$ and let $i\in\omega$ be a nonzero integer. 
\begin{enumerate}
\item We call $B\subset A$ an \emph{$i$-base }subset of $A$ if there
is $C\in\mathcal{F}(A)$ such that: 

\begin{enumerate}
\item $\mathfrak{l}_{C}\geq i$ and $B\subseteq C_{i-1}$; 
\item There exists $D\subseteq C$ where $D$ is $0$-minimally algebraic
over $B_{0}$ where $\cl{B_{0}}=B$. We say $D$ is a \emph{zero-minimal
}set (z.m.) over $B$. 
\end{enumerate}
\item Suppose $B$ is an $i$-base subset of $A$ and let$D\subseteq A$
be a z.m. over $B$. We say $A$ has \emph{$i$-uniform algebraicity
for the isomorphic copies of $D$} if $\left|\mu_{A}\left(D,\alpha\right)\right|=\left|\mu_{A}\left(D,\alpha'\right)\right|$
for all $\alpha,\alpha'\in\PK AB$. We say $A$ has \emph{$i$-uniform
algebraicity over $B$} if $A$ has $i$-uniform algebraicity for
the isomorphic copies of every z.m. subsets of $A$ over the $i$-base
$B$. 
\item We say $A$ has \emph{$i$-uniform algebraicity} ($i$-u.a.) when
either $A$ does not have any $i$-base subset for $i\in\omega$,
or $A$ has $i$-uniform algebraicity over all $i$-base subsets of
$A$.
\end{enumerate}
\end{defn}
In the following we give the proof of the extension property of $\left(\mathsf{C},\leqslant\right)$. 
\begin{proof}[Proof of Lemma \ref{lem:ep-im}]
 In the following, we are going to construct $B$ in few steps. The
number of steps depends on the level of complexity of maximal connected
zero sets of $A$. Note that elements of $\mathcal{E}(A)$, as we
have mentioned in Remark \ref{rem-block}, are disjoint and its elements
are not connected via an edge. The idea of the proof for the case
when the level of the complexity of every m.c.z of the given element
is zero as follows: As the c.z. sets are the smallest $\leqslant$-closed
sets, the partial maps for each c.z. set are either defined for the
whole set or not defined at all. So we can see them as colored single
points (color determines the isomorphism type). Then it is easy to
see how we can extend the partial maps to an automorphism in a bigger
structure. When the level of complexity is higher it becomes more
complicated but still doable by induction.

For each $0\leq i\leq n$ write $D_{i}:=\mathrm{dom}\left(e_{i}\right)$
and $R_{i}:=\mathrm{ran}\left(e_{i}\right)$. Notice that $e_{i}$'s
are isomorphisms of $\leqslant$-closed sets and elements of $\mathcal{E}(A)$
are the smallest $\leqslant$-closed subsets of $A$. Therefore for
an element $E\in\mathcal{E}(A)$ either $E\subseteq D_{i}\cup R_{i}$
or $E\cap\left(D_{i}\cup R_{i}\right)=\emptyset$. 

For each $0\leq i\leq n$ and $d\in D_{i}$, let $\mathfrak{o}_{i,d}$
be the smallest natural number that $e_{i}^{(\mathfrak{o}_{i,d})}\left(d\right)=e_{i}\circ\cdots\circ e_{i}\left(d\right)=d$
if exists; otherwise let $\mathfrak{o}_{i,d}=1$. Put $\mathfrak{o}_{i}:=\mbox{max }\left\{ \mathfrak{o}_{i,d}:d\in D_{i}\right\} $
and let $\mathfrak{o}:=\prod_{0\leq i\leq n}\mathfrak{o}_{i}$.

Fix an enumeration $\left\{ E_{1},\cdots,E_{k}\right\} $ of the elements
of $\mathcal{E}\left(A\right)$ and put $\mu_{j}:=\left|\PK A{E_{j}}\right|$
for $1\leq j\leq k$. Note that $\mu_{i}=\mu_{j}$ for $1\leq i,j\leq k$
when $E_{i}\cong E_{j}$. Put $\check{E}=\bigcup_{0\leq j\leq k}E_{j}$
and let $B_{0}$ be the $\mathfrak{L}$-structure that is the disjoint
union of isomorphic copies of $E_{i}$'s for each $1\leq i\leq k$
such that $\left|\PK{B_{0}}{E_{i}}\right|=\mathfrak{o}\cdot\mu_{i}$,
and they are connected by an edge. Then $\delta\left(B_{0}\right)=0$,
$\check{E}\leqslant B_{0}$ and $B_{0}\in\mathsf{K_{0}}$. 

In the following, for each $0\leq i\leq n$, we introduce $f_{i,0}$,
an automorphism of $B_{0}$, that it extends $e_{i}\upharpoonright\check{E}$.
Fix $0\leq\mathfrak{i}\leq n$, and let $D_{\mathfrak{i},0}:=D_{\mathfrak{i}}\cap\check{E}$
and $R_{\mathfrak{i},0}:=R_{\mathfrak{i}}\cap\check{E}$. 
\begin{casenv}
\item Suppose $\mathfrak{o}_{\mathfrak{i}}=1$. Define $f_{\mathfrak{i},0}$
 be as follows: 

\begin{enumerate}
\item $f_{\mathfrak{i},0}\upharpoonright D_{\mathfrak{i},0}=e_{\mathfrak{i}}\upharpoonright D_{\mathfrak{i},0}$; 
\item For each $r\in R_{\mathfrak{i},0}\backslash D_{\mathfrak{i},0}$,
define $f_{\mathfrak{i},0}\left(r\right)=d$ where $d\in D_{\mathfrak{i},0}$
such that there exists $s\geq1$ with $e_{\mathfrak{i}}^{(s)}\left(d\right)=e_{\mathfrak{i}}\circ\cdots\circ e_{\mathfrak{i}}\left(d\right)=r$,
and $d\notin R_{\mathfrak{i}}$. Notice that in this case $f_{\mathfrak{i},0}^{(s+1)}\left(d\right)=d$. 
\item $f_{\mathfrak{i},0}$ fixes all elements of $B_{0}\backslash\left(R_{\mathfrak{i},0}\cup D_{\mathfrak{i},0}\right)$. 
\end{enumerate}

For $b_{1},b_{2}\in B_{0}$ one can check that $\mathcal{R}^{B_{0}}(b_{1},b_{2})$
if and only if $\mathcal{R}^{B_{0}}\left(f_{\mathfrak{i},0}\left(b_{1}\right),f_{\mathfrak{i},0}\left(b_{2}\right)\right)$. 

\item Suppose $\mathfrak{o}_{\mathfrak{i}}\neq1$. Assume $E_{j}\subseteq D_{\mathfrak{i},0}$
and $E_{j}$ has the smallest index in $\{E_{1},\cdots,E_{k}\}$. 

\begin{casenv}
\item If $e_{\mathfrak{i}}^{\left(s\right)}\left[E_{j}\right]=E_{j}$ for
some $s\leq\mathfrak{o}_{i}$, then define $f_{\mathfrak{i},0}$ be
the same as $e_{\mathfrak{i}}$ for $E_{j}$. 
\item Otherwise, define $f_{\mathfrak{i},0}$ as follows: First let $s$
be such that $e_{\mathfrak{i}}^{\left(s\right)}\left[E_{j}\right]\in R_{\mathfrak{i},0}$
but $e_{\mathfrak{i}}^{\left(s+1\right)}\left[E_{j}\right]$ is not
defined. By our assumption $s\leq\mu_{j}$ and moreover $\left|\PK{D_{\mathfrak{i},0}}{E_{j}}\right|\leq\mu_{j}$.
$B_{0}$ has $\left(\mathfrak{o}\cdot\mu_{j}\right)$-many distinct
isomorphic copies of $E_{j}$. Pick $\mathfrak{o}_{\mathfrak{i}}\cdot\left(s-1\right)$-many
distinct elements $\left\{ E_{j}^{r}:1\leq r\leq\mathfrak{o}_{\mathfrak{i}}\cdot\left(s-1\right)\right\} $
of the isomorphic copies of $E_{j}$ in $B_{0}\backslash\left(D_{\mathfrak{i},0}\cup R_{\mathfrak{i},0}\right)$
and extend $e_{\mathfrak{i}}$ to $f_{\mathfrak{i},0}$ in such way
that: 

\begin{enumerate}
\item $f_{\mathfrak{i},0}\left[e_{\mathfrak{i}}^{(s)}\left[E_{j}\right]\right]=E_{j}^{1}$; 
\item $f_{\mathfrak{i},0}\left[E_{j}^{r}\right]=E_{j}^{r+1}$ for $1\leq r<\mathfrak{o}_{\mathfrak{i}}\cdot\left(s-1\right)$; 
\item $f_{\mathfrak{i},0}\left[E_{j}^{\mathfrak{o}_{\mathfrak{i}}\cdot\left(s-1\right)}\right]=E_{j}$. 
\end{enumerate}
\end{casenv}

Notice that in this case $f_{\mathfrak{i},0}^{\left(\mathfrak{o}_{\mathfrak{i}}\cdot s\right)}\left[E_{j}\right]=E_{j}$.
We continue this procedure similarly and define $f_{i,0}$ for each
element of $\mathcal{E}\left(A\right)$ in the domain of $e_{\mathfrak{i}}$
and in each stage we make sure that we pick those isomorphic copies
that is not chosen in previous steps. There are enough isomorphic
copies of each element of $\mathcal{E}\left(A\right)$ in $B_{0}$
to allow us to extend $e_{\mathfrak{i}}$ to $f_{\mathfrak{i},0}$
as we desire. Finally let $f_{\mathfrak{i},0}$ fixes the elements
that are not chosen in the procedure. One can check that $f_{\mathfrak{i},0}$
is an automorphism of $B_{0}$.

\end{casenv}
If $\bigcup\mathcal{E}\left(A\right)=A$, then we are finished in
this first step and, $B_{0}$ and $f_{i,0}$'s for $0\leq i\leq n$
are our solution.

Suppose now $\bigcup\mathcal{E}\left(A\right)\neq A$. Let $\mathfrak{l}:=\mbox{max }\left\{ \mathfrak{l}_{C}:C\in\mathcal{F}\left(A\right)\right\} $
and note that in this case $\mathfrak{l}>0$. Our aim is to construct
$B_{j}\in\mathsf{C}$ for $1\leq j\leq\mathfrak{l}$ by induction
such that: 
\begin{enumerate}
\item $B_{0}\leqslant B_{1}\leqslant\dots\leqslant B_{\mathfrak{l}}$, and
$A\subseteq B_{\mathfrak{l}}$;
\item $B_{q}$ contains all subsets of $A$ with the level of complexity
$\leq q$, for $0<q\leq\mathfrak{l}$; 
\item $B_{q}$ has $q$-uniform algebraicity, for $0<q\leq\mathfrak{l}$. 
\end{enumerate}
And then for each $0\leq i\leq n$ we explain how to extend $f_{i,q}$
to $f_{i,q+1}$, an automorphism of $B_{q}$, that extends $e_{i}\upharpoonright\left(B_{q}\cap A\right)$
for $0\leq q<\mathfrak{l}$. Our final solution for EP is $B_{\mathfrak{l}}$
and automorphisms $f_{i}:=f_{i,\mathfrak{l}}$ for $0\leq i\leq n$. 

We only explain how to construct $B_{1}$ from $B_{0}$, and for a
fixed $0\leq\mathfrak{i}\leq n$ how to extend $f_{\mathfrak{i},0}$
to an automorphism $f_{\mathfrak{i},1}$ of $B_{1}$; the rest can
be done inductively in a similar way. Suppose $S$ is a $1$-base
subset of $A$. Let
\[
\mathcal{G}_{A}\left(S\right):=\left\{ Z\subseteq A:Z\mbox{ is z.m. set over }\alpha\left[S\right]\mbox{ for }\alpha\in\PK AS\right\} .
\]
For an element $Z\in\mathcal{G}_{A}\left(S\right)$ put $\nu=\mbox{ max }\left\{ \left|\mu_{B_{0}}\left(Z,\alpha\right)\right|:\alpha\in\PK{B_{0}}S\right\} $.
Suppose $\left|\mu_{B_{0}}\left(Z,\alpha\right))\right|<\nu$ for
$\alpha\in\PK{B_{0}}S$. Let $I=\left\{ 1,\cdots,\nu-\left|\mu_{B_{0}}\left(Z,\alpha\right)\right|\right\} $,
and let $B^{\alpha}$  be an $\mathfrak{L}$-structure that contains
$B_{0}$ such that 
\begin{enumerate}
\item For each $i\in I$ there is $Z_{\alpha}^{i}$ such that $\mbox{tp}\left(Z_{\alpha}^{i}/\alpha\left[D\right]\right)\equiv\mbox{tp}\left(\beta\left[Z\right]/\alpha\left[D\right]\right)$
for any $\beta\in\mu_{B_{0}}\left(Z,\alpha\right)$ and $Z_{\alpha}^{i}\cap B_{0}=\emptyset$;
and,
\item $Z_{\alpha}^{i}\cap Z_{\alpha}^{j}=\emptyset$ and $\mathcal{R}^{B^{\alpha}}\left(Z_{\alpha}^{i},Z_{\alpha}^{j}\right)=\emptyset$
for $i\neq j\in I$.
\end{enumerate}
It is clear that $\delta\left(B^{\alpha}\right)=0$ and $B^{\alpha}\in\mathsf{K_{0}}$.
Then, there is an isomorphic copy of $B^{\alpha}$ in $\mathsf{C}$
over $B_{0}$ which with abuse of notation we assume $B^{\alpha}\in\mathsf{C}$.
Now let $B^{Z}$ be the free-amalgam of $B^{\alpha}$'s over $B_{0}$
for all $\alpha\in\PK{B_{0}}S$ with $\left|\mu_{B_{0}}\left(Z,\alpha\right)\right|<\nu$.
Since $\mathsf{K_{0}}$ has the free-amalgamation property then $B^{Z}\in\mathsf{K_{0}}$
and with abuse of notation we assume $B^{Z}\in\mathsf{C}$. It is
easy to check that $B^{Z}$ has $1$-uniform algebraicity for isomorphic
copies of $Z$. Using the free-amalgamation property we construct
$B^{Z}$ for each element $Z\in\mathcal{G}_{A}\left(S\right)$ and
then let $B^{S}$ be the free-amalgam of all $B^{Z}$'s over $B_{0}$
for $Z\in\mathcal{G}_{A}\left(S\right)$. If $S$ and $S'$ are isomorphic
and both $1$-base subset of $A$ we let $B^{S}=B^{S'}$.

Repeat the same procedure and construct $B^{S}$ for every isomorphism
type of 1-base subset $S$ of $A$. Now let $B_{1}$ be the free-amalgam
of all $B^{S}$'s over $B_{0}$ where $S$ is a 1-base subset of $A$.
One can check that $B_{1}$ has $1$-uniform algebraicity and $B_{1}$
contains all subsets of $A$ with level of complexity $\leq1$.

We now explain how one can extend $f_{\mathfrak{i},0}$ and $e_{\mathfrak{i}}\upharpoonright\left(B_{1}\cap A\right)$,
simultaneously, to an automorphism $f_{\mathfrak{i},1}$ of $B_{1}$.
Suppose $S$ is a $1$-base subset of $A$ and $Z\subseteq A$ is
a zero-minimal set over $S$. Let $\mathfrak{o}_{S}$  be the smallest
number that $f_{\mathfrak{i},0}^{\left(\mathfrak{o}_{S}\right)}\left[S\right]=S$.
Note that $S\subseteq B_{0}$ and $\mathfrak{o}_{S}$ exists as $f_{\mathfrak{i},0}$
is an automorphism of $B_{0}$. First suppose $Z\cap\left(D_{\mathfrak{i}}\cup R_{\mathfrak{i}}\right)=\emptyset$.
Then pick $\left(\mathfrak{o}_{S}-1\right)$-many distinct copies
$Z^{j}$ of $Z$ such that $Z^{j}\cap\left(D_{\mathfrak{i}}\cup R_{\mathfrak{i}}\right)=\emptyset$
and $\mbox{tp}\left(Z^{j}/f_{\mathfrak{i},0}^{\left(j\right)}\left[S\right]\right)\equiv\mbox{tp}\left(Z/S\right)$
for $1\leq j\leq\mathfrak{o}_{s}-1$. Extend $f_{\mathfrak{i},0}$
to $f_{\mathfrak{i},1}$ such that 
\begin{enumerate}
\item $f_{\mathfrak{i},1}\left[Z\right]=Z^{1}$;
\item $f_{\mathfrak{i},1}\left[Z^{j}\right]=Z^{j+1}$ for $1\leq j<\mathfrak{o}_{S}-1$
and $f_{i,1}\left[Z^{\mathfrak{o}_{S}-1}\right]=Z$.
\end{enumerate}
Now suppose $Z\subseteq\left(D_{\mathfrak{i}}\cup R_{\mathfrak{i}}\right)$. 
\begin{casenv}
\item If $e_{\mathfrak{i}}^{\left(\mathfrak{o}_{S}\right)}\left(z\right)=z$
for all $z\in Z$, then let $f_{\mathfrak{i},1}$ be an extension
of $f_{\mathfrak{i},0}$ and $e_{\mathfrak{i}}\upharpoonright Z$.
Note that since distinct copies of $Z$ are disjoint such extension
of $f_{\mathfrak{i},0}$ exists. 
\item Otherwise, suppose $e_{\mathfrak{i}}^{\left(r\right)}\left[Z\right]\in R_{\mathfrak{i}}$
but $e_{\mathfrak{i}}^{\left(r+1\right)}\left[Z\right]$ is not defined.
Without loss of generality we also assume $Z\nsubseteq R_{\mathfrak{i}}$.
Note that $r<\mathfrak{o}_{S}$. Then pick $\left(\mathfrak{o}_{S}-r\right)$-many
distinct elements $\left\{ Z^{q}:1\leq q\leq\left(\mathfrak{o}_{S}-r\right)\right\} $
of copies of $Z$ such that $\mbox{tp}\left(Z^{q}/f_{\mathfrak{i},0}^{(q)}\left[S\right]\right)\equiv\mbox{tp}\left(Z/S\right)$
for all $1\leq q\leq\left(\mathfrak{o}_{S}-r\right)$. Then extend
$f_{\mathfrak{i},0}$ to $f_{\mathfrak{i},1}$ such that 

\begin{enumerate}
\item $f_{\mathfrak{i},1}\left[e_{\mathfrak{i}}^{\left(\mathfrak{o}_{S}\right)}\left[Z\right]\right]=Z^{1}$; 
\item $f_{\mathfrak{i},1}\left[Z^{q}\right]=Z^{q+1}$ for $1\leq q<\left(\mathfrak{o}_{S}-r\right)$; 
\item $f_{\mathfrak{i},1}\left[Z^{\left(\mathfrak{o}_{S}-r\right)}\right]=Z$. 
\end{enumerate}
\end{casenv}
Note that this is guaranteed by $1$-uniform algebraicity of $B_{1}$
and it is clear that $f_{\mathfrak{i},1}^{\left(\mathfrak{o}_{S}\right)}\left[Z\right]=Z$.
We continue this procedure and define $f_{\mathfrak{i},1}$  be an
extension of $f_{\mathfrak{i},0}$ and $e_{\mathfrak{i}}\upharpoonright\left(A\cap B_{1}\right)$
for all 1-based subsets of $A$ in the domain of $e_{\mathfrak{i}}$.
Finally let $f_{\mathfrak{i},1}$ fixes the rest of the elements of
$B_{1}$ that has not been already in the domain or range of $f_{\mathfrak{i},1}$.
One can check that $f_{\mathfrak{i},1}$ is an automorphism of $B_{1}$.
\end{proof}
And finally 
\begin{proof}[Proof of Lemma \ref{thm:main-1}]
 It is clear that if $\Aut_{A}\left(X\right)$ has SIP, then $\Aut\left(X\right)$
has SIP (for example it follows from Theorem 5.1.5 in \cite{Zthesis}).
Let $\mathscr{C}_{A}:=\left\{ B\leqslant X:A\subseteq B\right\} $.
It is easy to show that $\mathscr{C}_{A}$ is an amalgamation base:
With a similar argument for proving EP for $(\mathsf{C},\leqslant)$
in Section \ref{SEC:4}, one can show if $f_{0},\cdots,f_{n}$ are
partial isomorphisms of $\leqslant$-closed subsets of $D\in\mathscr{C}_{A}$
that are extendable to automorphisms of $\hat{M}_{A}$, then there
is $D'\in\mathscr{C}_{A}$ such that $D\subseteq D'$ and $f_{i}$'s
extend to automorphisms of $D'$. 
\end{proof}

\section{Remaining cases}

It is known that if the automorphism group of an $\omega$-categorical
structure $M$ has the \emph{strong} small index property then $\mbox{Th}\left(M\right)$
admits weak elimination of imaginaries (see \cite{MR1221741} p.161
for definition and reference). Furthermore, $\mbox{Th}\left(\mathbf{M}\right)$
has weak elimination of imaginaries (cf. \cite{Balshi}, Proposition
5.3) and it is interesting to determine whether or not $\Aut\left(\mathbf{M}\right)$
has the strong small index property.

When the coefficient of the pre-dimension $\delta$ is rational, using
a finite-to-one function $\mu$ over the $0$-minimally algebraic
elements, one can restrict the ab-initio class $\mathsf{K_{0}}$ to
$\mathsf{K_{0}}^{\mu}$ such that $\left(\mathsf{K_{0}}^{\mu},\leqslant\right)$
has the amalgamation property (see \cite{Hrunew} for details). Let
$\mathbf{M}^{\mu}$ be the $\left(\mathsf{K_{0}^{\mu}},\leqslant\right)$-generic
structure. Note that $\mathbf{M}^{\mu}$ is the original \emph{`collapsed}'
version of the construction from \cite{Hrunew} which produces structures
of finite Morley rank. In Chapter 5 in \cite{Zthesis}, some results
have been given about the small index subgroups of the automorphism
group of some collapsed ab-initio generic structures (see for example
Theorem 5.1.6 in \cite{Zthesis}). Using similar arguments as of the
uncollapsed case one can show the following: 
\begin{thm}
Suppose $\mathbf{M}^{\mu}$ is a countable collapsed Hrushovski ab-initio
generic structure and $H$ is a subgroup of small index in $G:=\Aut{\left(\mathbf{M}^{\mu}\right)}$.
Then $H$ is an open subgroup of $G$. 
\end{thm}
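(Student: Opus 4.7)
The plan is to mirror the uncollapsed argument, replacing each ingredient by its collapsed analogue while being careful about the $\mu$-bounds on $0$-algebraic configurations. The overall structure will again be: first prove an almost-SIP statement yielding a finite $A$ with $G_{\gcl{A}} \leq H$; then reduce, via the projection Lemma \ref{pro:main} (whose proof goes through verbatim in the collapsed setting since only $\leqslant$-genericity and existence of extensions are used), to showing $\Aut(\gcl{A})$ has SIP; and finally establish the latter by identifying an appropriate amalgamation base inside $\gcl{A}$ and appealing to Theorems 2.9 and 5.3 of \cite{MR1231710}.

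For the almost-SIP step, I would first reproduce Lemma \ref{lem-sym} in $\mathbf{M}^\mu$: one builds a sequence $\langle b_i : i<\omega\rangle$ such that every initial segment is $\leqslant$-closed, every element of $\mathbf{M}^\mu$ lies in the geometric closure of some initial segment, and every permutation of $B = \{b_i\}$ extends to an automorphism. The construction of $b_{i+1}$ requires adding $\mathfrak{m}$-generic points (which is fine in $\mathsf{K_0^\mu}$ since adding a disconnected vertex never triggers the $\mu$-bound), and the back-and-forth extension of a permutation proceeds as before because the isomorphic copies used are $\leqslant$-closed and disjoint from the base. Then one enriches the language by adjoining, level by level as in the proof of Theorem \ref{thm:one}, a countable family $\mathfrak{F}$ of skeletal functions witnessing $0$-minimally algebraic extensions, plus the unary predicate $\mathcal{I}$ for $B$; the only change is that in the collapsed case only finitely many isomorphism types of $0$-minimally algebraic extensions of a given base exist (by $\mu$), and each has only boundedly many copies, so $\mathfrak{F}$ is still countable and compatible with permutations of $B$. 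The rest of the Lascar-style argument (Polish group $\hat{\mathbf{G}}=\mathbf{G}(\mathcal{P})$, generics, and non-meagerness) proceeds identically, using that the stationary independence from $\gcl{-}$ is still available in the collapsed setting (\cite{ZEvansTent}).

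For showing $\Aut(\gcl{A})$ has SIP, the plan is to establish an extension property for the class $\mathsf{C}^\mu_A := \{B \in \mathsf{K_0^\mu} : A \leqslant B,\ \delta(B)=\delta(A)\}$ and conclude, as in Corollary \ref{cor:amal} and Theorem \ref{thm:main}, that $\mathscr{C}_A := \{B \leqslant \gcl{A}_{\mathbf{M}^\mu} : A \subseteq B\}$ is an amalgamation base for $\gcl{A}$. The combinatorial heart is to adapt the inductive construction in the proof of Lemma \ref{lem:ep-im}: one builds $B_0 \leqslant B_1 \leqslant \cdots \leqslant B_{\mathfrak{l}}$ absorbing maximal connected zero-sets of increasing complexity, with $q$-uniform algebraicity at stage $q$, and simultaneously extends the partial maps $e_i$ to automorphisms $f_{i,q}$.

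The main obstacle will be that the free-amalgamation step used repeatedly in the proof of Lemma \ref{lem:ep-im}, as well as the freedom to pick arbitrarily many isomorphic copies of a given $0$-minimal extension over a base $S$, is bounded above in the collapsed setting by the function $\mu$. To handle this I would proceed as follows. First observe that in $\gcl{A}$, every $0$-minimally algebraic configuration occurs with a fixed, $\mu$-determined multiplicity, so $\mathbf{M}^\mu$ itself already has $i$-uniform algebraicity over every $i$-base subset; hence one does not need to enlarge the structure to achieve uniformity, only to accommodate the cyclic orbit requirements dictated by $\mathfrak{o}_i$. To produce the required disjoint copies of a z.m.\ set $Z$ without violating the $\mu$-bound, I would work modulo $\mathfrak{o}$, choosing the multiplicity of the seed structure $B_0$ to be a multiple of $\mu(\mathrm{tp}(Z/S))$, and observe that extension of partial isomorphisms via cyclic permutation of orbits respects the $\mu$-count because it merely permutes already-present copies rather than introducing new ones. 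The verification that the amalgamation step at each level remains inside $\mathsf{K_0^\mu}$ reduces, via Lemma \ref{lem:basic-generic-delta} and the usual bookkeeping for $\mu$, to checking it on each $0$-minimally algebraic extension separately, which is guaranteed by the choice of multiplicities in $B_0$. With EP for $(\mathsf{C}^\mu_A,\leqslant)$ in hand, the argument of Theorem \ref{thm-SIP} carries over unchanged to conclude that $H$ is open in $G$.
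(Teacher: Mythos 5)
The paper offers no proof of this statement at all — it simply says ``Using similar arguments as of the uncollapsed case one can show the following'' and points to Chapter~5 of the thesis for related results. Your proposal is, in effect, a fleshed-out version of that one-line instruction, and the high-level decomposition (almost-SIP via Lascar's method, reduction via Lemma~\ref{pro:main}, SIP for $\Aut(\gcl{A})$ via an amalgamation base and Theorems~2.9, 5.3 of~\cite{MR1231710}) is exactly the route the paper intends.

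You correctly identify the genuine technical crux that the paper sweeps under the rug: the proof of Lemma~\ref{lem:ep-im} leans heavily on the free-amalgamation property of $(\mathsf{K_0},\leqslant)$, and $(\mathsf{K_0^\mu},\leqslant)$ does not have it. Two points in your workaround remain underspecified. First, the uncollapsed construction of $B_0$ takes $\mathfrak{o}\cdot\mu_i$ disjoint copies of each $E_i\in\mathcal{E}(A)$; whether this is permissible in $\mathsf{K_0^\mu}$ depends on whether $\mu$ restricts $0$-minimally algebraic extensions over $\emptyset$ (a minimally closed $E$ with $\delta(E)=0$ is $0$-m.a.\ over $\emptyset$), and you do not address this. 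Second, your remark that $\mathbf{M}^\mu$ itself already has uniform algebraicity is about the generic, not about the finite structures $B_q$ you must build; you would need to argue that a suitable finite $\leqslant$-closed piece of $\gcl{A}$ inherits $q$-uniform algebraicity and accommodates the orbit-length bookkeeping $\mathfrak{o}$ without exceeding the $\mu$-count over any fixed base. The observation that the cyclic-orbit extension only \emph{permutes} existing copies rather than creating new ones is the right heuristic, and it is plausible that one can select $B_q$ as a union of complete $\mu$-orbits of z.m.\ sets over the $q$-bases so that this works, but your sketch stops short of verifying the inductive step remains inside $\mathsf{K_0^\mu}$. Since the paper itself supplies none of these details, your attempt is at least as complete as the paper's, but a referee would want the free-amalgamation replacement spelled out in the form of a collapsed analogue of Lemma~\ref{lem:ep-im}.
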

However, the small index property and almost SIP for the automorphism
groups of the following generic structures remain unanswered in this
paper: ab-initio generic structures which are obtained from pre-dimension
functions with irrational coefficients, and simple $\omega$-categorical
generic structures (see \cite{MR1900903,ZEvansTent}).

\bibliographystyle{siam}
\bibliography{ALL}

\end{document}